\newtheorem{theo}{Theorem}[section]
\newtheorem{lemm}[theo]{Lemma}
\newtheorem{rema}[theo]{Remark}
\numberwithin{equation}{section}
\def\th2{\frac{\theta}{2}}
\begin{document}
\title{\bf Hopf cyclicity and global dynamics for a predator-prey system of Leslie type with simplified Holling type IV functional response\thanks{Supported by the National Science Foundation of China(No.11571379).}}
\author{Yanfei Dai $^{1}$, \quad Yulin Zhao $^{2}$
\thanks{Corresponding author.  E-mail: mcszyl@mail.sysu.edu.cn.}}

\date{\it\footnotesize $^{1}$ School of Mathematics, Sun Yat-sen University, Guangzhou, 510275, P. R. China\\
$^{2}$School of Mathematics (Zhuhai), Sun Yat-sen University, Zhuhai, 519082, P. R. China}

\maketitle %\noindent\rule[1.5mm]{\textwidth}{0.1pt} \noindent{\bf

\begin{abstract}
This paper is concerned with a predator-prey model of Leslie type with simplified Holling type IV functional response, provided that it has either a unique non-degenerate positive equilibrium or three distinct positive equilibria. The type and stability of each equilibrium, Hopf cyclicity of each weak focus, and the number and distribution of limit cycles in the first quadrant are studied.
It is shown that every equilibrium cannot be a center.
If system has  a  unique positive equilibrium which is a weak focus, then its order is at most $2$ and it has Hopf cyclicity $2$. Moreover, some sufficient conditions for the global stability of the unique  equilibrium are established by applying  Dulac's criterion and constructing the Liapunov function.
If system has three distinct positive equilibria, then one of them is a saddle and the others  are both anti-saddles. For two anti-saddles, we prove that the Hopf cyclicity for  positive equilibrium with smaller abscissa (resp.  bigger abscissa) is  $2$ (resp. $1$). Furthermore, if both  anti-saddle positive equilibria are weak foci, then they are unstable multiple foci with multiplicity one.
Moreover, one limit cycle can bifurcate from each of them simultaneously.
Numerical simulations demonstrate  that there is  a big stable limit cycle enclosing these two small limit cycles.
\end{abstract}
\maketitle
{\bf Keywords}: Predator-prey system; Simplified Holling type IV functional response; Hopf cyclicity; limit cycles; global stability \\
\rule[1.5mm]{\textwidth}{0.1pt}
%\date{\today}
\section{Introduction}
\setcounter{equation}{0}

The dynamic interaction between predators and their prey is one of the
most fundamental interactions in ecology and mathematical ecology due to its universality and importance, see \cite{Volterra1927,May1972}.
Wollkind et al.  \cite{Wollkind1978,Wollkind1988} modeled the population interaction
between the predacious mite \textit{Metaseiulus occidentalis} Nesbitt and its spider
mite prey \textit{Tetranychus mcdanieli} McGregor on fruit trees in Washington State \cite{Hoyt1967,Hoyt1969} and adapted the following ordinary differential equations based on a predator-prey system proposed by May \cite{May1973}
\begin{eqnarray}\label{dai1.1}
\left\{
\begin{array}{ll}
\dot{x}=r x \left(1-\dfrac{x}{K}\right)-p(x) y, \\[6pt]
\dot{y}=s y \left(1-\dfrac{y}{h x}\right),
\end{array}
\right.
\end{eqnarray}
where $x(t)$ and $y(t)$ are the population densities of the
prey and predator at time $t$, respectively.
They assumed that the prey grows logistically with intrinsic growth rate $r$ and carrying capacity $K$ in the absence of predation, the function response is function $p(x)$ with Holling type, and the predator's numerical response is Leslie form originated by Leslie \cite{Leslie1948}  and
grows logistically with intrinsic growth rate $s$ and carrying capacity proportional to
the population of the prey.
When the functional response $p(x)$ is Holling type I, II, III functions in Holling \cite{Holling1965} respectively, Leslie in \cite{Leslie1960},
Hsu and Huang in \cite{Hsu1995,Hsu1998,Hsu1999} obtained some interesting results such as the existence of global attracting positive equilibrium for some parameter values, and the existence and uniqueness of limit cycles for other parameter values, etc.
Collings in \cite{Collings1997} by numerical simulations showed that the bifurcation and stability behaviors of the model \eqref{dai1.1} with Holling type I, II, III functional response are qualitatively similar, but which is different to the dynamical behaviors of the model \eqref{dai1.1} with Holling type IV at higher levels of prey interference.

For Holling type response function, Sokol and Howell in \cite{Sokol1980}  proposed
a two-parameter simplification of Monod-Haldane functional response \cite{Andrews1968} of the form
\begin{equation*}
p(x)=\frac{m x}{x^2+b},
\end{equation*}
and found that it can fit their experimental data significantly better and is simpler since it involves only two parameters.
This function is called  the simplified Holling type IV functional response \cite{Ruan2001,Li2007} and can also be used to describe the phenomenon of ``inhibition'' in microbial dynamics and ``group defence'' in population dynamics.
Li and Xiao \cite{Li2007} considered the Leslie type predator-prey model \eqref{dai1.1} with simplified Holling type IV functional response
\begin{eqnarray}\label{dai1.2}
\left\{
\begin{array}{ll}
\dot{x}=r x \left(1-\dfrac{x}{K}\right)-\dfrac{m x y}{x^2+b}, \\[6pt]
\dot{y}=s y \left(1-\dfrac{y}{h x}\right),
\end{array}
\right.
\end{eqnarray}
where $r,\ K,\ m,\ b,\ s$ and $h$ are all positive parameters.
For simplicity, they scaled $x$, $y$, $t$ and parameters in \eqref{dai1.2} by letting
\begin{equation}\label{dai1.3}
\begin{aligned}
\bar{x}=\frac{x}{K},\,\, \bar{y}=\frac{my}{rK^2},\,\, \bar{t}=rt,\,\, a=\frac{b}{K^2}, \,\, \delta=\frac{s}{r},\,\, \beta=\frac{sK}{hm}.
\end{aligned}
\end{equation}
Dropping the bars,  system \eqref{dai1.2} is equivalent to
\begin{eqnarray}\label{dai1.4}
\left\{
\begin{array}{ll}
\dot{x}=x \left(1-x\right)-\dfrac{x y}{x^2+a}, \\[6pt]
\dot{y}=y \left(\delta-\dfrac{\beta y}{x}\right).
\end{array}
\right.
\end{eqnarray}
Suppose $\bar{E}(\bar{x}, \bar{y})$ is a positive equilibrium of system \eqref{dai1.4}, then $\bar{x}$ is a positive root of the equation
\begin{equation}\label{dai1.5}
{x}^3-{x}^2+\left(a+\frac{\delta}{\beta}\right) x-a=0
\end{equation}
in the interval $(0, 1)$.

We denote the determinant and trace of the Jacobian matrix of system \eqref{dai1.4} at $\bar{E}$ by $\textrm{Det}\left(J(\bar{E})\right)$ and $\textrm{Tr}\left(J(\bar{E})\right)$, respectively. Then $\bar{E}$ is called an elementary equilibrium  if $\textrm{Det}\left(J(\bar{E})\right)\neq0$, otherwise  it is a degenerate equilibrium.
Specially, $\bar{E}$ is called a hyperbolic saddle  if $\textrm{Det}\left(J(\bar{E})\right)<0$,  and called \emph{center or focus type} if $\textrm{Det}\left(J(\bar{E})\right)>0$ and $\textrm{Tr}\left(J(\bar{E})\right)=0$, respectively.
As usual, the positive equilibrium of system \eqref{dai1.2} is said to have \emph{Hopf cyclicity} $k$ if for any biologically meaningful parameters, at most $k$ limit cycles can bifurcate from this equilibrium by Hopf bifurcation, and there exist some choice of biologically meaningful parameters such that $k$ limit cycles can appear near the equilibrium.

For system \eqref{dai1.4}, it can be inferred from \cite{Hsu1995} that if $\bar{x}$ is a multiple positive root of Eq. \eqref{dai1.5} in the interval $(0, 1)$, then $\bar{E}(\bar{x}, \bar{y})$ must be a degenerate positive equilibrium of system \eqref{dai1.4}.
Let $A=1-3(a+\delta/\beta)$ and $\Delta=(1-27a-3A)^2-4A^3$. It is shown in \cite{Li2007} that the following statements hold:
\begin{itemize}
\item[(a)] Suppose $\Delta>0$, then system \eqref{dai1.4} has a unique positive equilibrium, which is an elementary and anti-saddle equilibrium;
\item[(b)] Suppose both $\Delta=0$ and $A=0$, i.e., $a=1/27$, $\delta/\beta=8/27$, then system \eqref{dai1.4} has a unique positive equilibrium, which is a degenerate equilibrium;
\item[(c)] Suppose $\Delta=0$ and $A>0$, then system \eqref{dai1.4} has two distinct positive equilibria.
\begin{itemize}
\item[(c1)] if  $(1-27a-3A)=-2A^{3/2}$, then $E^{*}(x^{*}, y^{*})$ is a degenerate equilibrium, and  $E^{*}_1(x_1^{*}, y_1^{*})$ is an elementary and anti-saddle equilibrium, where $x^{*}<x_1^{*}$;
\item[(c2)] if  $(1-27a-3A)=2A^{3/2}$, then $E^{*}_2(x_2^{*}, y_2^{*})$ is an elementary and anti-saddle equilibrium, and $E^{*}(x^{*}, y^{*})$  is a degenerate equilibrium, where $x_2^{*}<x^{*}$.
\end{itemize}
\item[(d)] Suppose $\Delta<0$, then system \eqref{dai1.4} has three different positive equilibria: one is a saddle and the other two are anti-saddle equilibria.
\end{itemize}
Li and Xiao \cite{Li2007} and Huang et al. \cite{Huang2016} provided detailed analysis on the above case (c) and case (b), respectively. These two cases correspond to the case when system \eqref{dai1.4} has at least one degenerate positive equilibrium.
More precisely, for the case (c), Li and Xiao \cite{Li2007} showed that system \eqref{dai1.4} can have two non-hyperbolic positive
equilibria (one is a cusp of codimension 2 and the other is a multiple focus of multiplicity one) for some values of parameters. They proved that Bogdanov-Takens bifurcation and subcritical Hopf bifurcation can occur simultaneously in the small neighborhoods of these two equilibria, respectively. For the case (b), Huang et al. \cite{Huang2016} showed that
there exists a unique degenerate positive equilibrium which is a degenerate Bogdanov-Takens singularity (focus case) of codimension $3$. They proved that the model exhibits
degenerate focus type Bogdanov-Takens bifurcation of codimension $3$ around this equilibrium.  These results not only showed that system \eqref{dai1.2} has rich and complicated dynamics, but also supported the numerical conclusion of Collings in \cite{Collings1997} that the dynamical behavior
of system \eqref{dai1.2} with Holling type IV functional response is  different from that of system \eqref{dai1.2} with Holling types I, II and III functional responses.

However, the Hopf bifurcation and global dynamics of system \eqref{dai1.2}
have not been discussed theoretically in Collings \cite{Collings1997}, Li and Xiao \cite{Li2007} and Huang et al. \cite{Huang2016} for the two cases (a) and (d) mentioned above, that is, the cases (I) when system \eqref{dai1.2} has a unique non-degenerate positive equilibrium; and (II) when system \eqref{dai1.2} has three distinct positive equilibria.
In this paper, we  investigate the number and distribution of limit cycles of system \eqref{dai1.2},  and the global dynamics in the first quadrant for these two cases.
By scalings of the coordinates,  system \eqref{dai1.2} can be transformed into an equivalent polynomial differential system in the interior of the first quadrant.
To get the global behavior of the equivalent system, the qualitative behavior near the origin is investigated.
When the equivalent system has a unique non-degenerate positive equilibrium, we show  that two limit cycles can bifurcate from this equilibrium and establish some sufficient conditions for the global stability of this equilibrium.
When the equivalent system has three distinct positive equilibria, one of them  is a saddle and the others  are both anti-saddle,  two  (resp. one )  limit cycle(s) can bifurcate from the  anti-saddle positive equilibrium with smaller abscissa (resp.  bigger abscissa).
In addition,  two weak foci can coexist and each of them is order one.
Moreover, it is shown that  one limit cycle can bifurcate from each of the two weak foci simultaneously.
Furthermore, numerical simulations demonstrate that there is also a big stable limit cycle enclosing these two limit cycles.

Although the predator-prey theory has made great progress in the past few decades, there are still a lot of mathematical and ecological problems unsolved. The existence and stability of limit cycles are important topics not only in the global theory for dynamical systems on the plane, but also in the study of mathematical ecology. Such studies have made it possible to be a better understanding of many real world oscillatory phenomena in nature \cite{May1972,Albrecht1973}.

To study the  stability and Hopf bifurcation for  positive equilibria, it  is  better to get the explicit expressions of the coordinates of the  equilibria. However,  the  coordinates for some positive equilibria of  predator-prey systems are generally very complicated, even one can not give exact and explicit expressions of their  coordinates.
This makes it difficult to study the stability and Hopf bifurcation for these  equilibria.
In this paper, we propose some available methods to solve these problems.
By appropriate scalings of the coordinates, any positive equilibrium of system \eqref{dai1.2}
can be transformed into the  equilibrium  $(1, 1)$.
Furthermore, by appropriate time scaling, system \eqref{dai1.2} can be reduced to an equivalent polynomial  system in the interior of the first quadrant.

The rest of the paper is organized as follows. Section $2$ is devoted to some preliminary results including model reduction, analysis of equilibria of an equivalent system of \eqref{dai1.2} and computation of Lyapunov constants at each center or focus type  equilibrium.
The qualitative behavior near the origin is presented in Section $3$,  which is prepared for the global analysis.
In Section $4$,  we investigate the Hopf bifurcation at each weak focus and the global stability.
The Hopf bifurcation and global dynamics when system has a unique non-degenerate positive equilibrium and when system has three distinct positive equilibria are studied in Section $4.1$ and Section $4.2$, respectively. All results are proved by real analysis and symbolic computation. The paper ends with a discussion.
\section{Some preliminary results}

In this paper, we begin to study  system \eqref{dai1.2}. The preliminary results provided in this section will be useful for the proof of the  main  results of the present paper.

\subsection{Model reduction}

To simplify the computation, we reduce system \eqref{dai1.2} to an equivalent polynomial differential system by scalings of the coordinates.

From the analysis in \cite{Li2007}, we know that system \eqref{dai1.2} has at least one and at most three positive equilibria in the interior of the first quadrant. Under the transformation \eqref{dai1.3},
the expressions of some positive equilibria are still too complicated that we can not study them. Motivated by the idea of \cite{Zhao2015}, in this paper, we will use some available methods to solve this problem.

Without loss of generality,  assume that $E(x^{*}, y^{*})$ is an arbitrary positive equilibrium of system \eqref{dai1.2}. By applying the following scalings of the coordinates and the constants scaling
\begin{equation}\label{dai2.1}
\begin{aligned}
\bar{x}=\frac{x}{x^{*}},\,\, \bar{y}=\frac{y}{y^{*}},\,\, \bar{t}=rt,\,\,\bar{K}=\frac{K}{x^{*}}, \,\, \bar{b}=\frac{b}{{x^{*}}^2},\,\, \bar{m}=\frac{m y^{*}}{r {x^{*}}^2},\,\, \bar{s}=\frac{s}{r},\,\, \bar{h}=\frac{h x^{*}}{y^{*}},
\end{aligned}
\end{equation}
and dropping the bars, system \eqref{dai1.2} is reduced to
\begin{eqnarray}\label{dai2.4}
\left\{
\begin{array}{ll}
\dot{x}=x\left(1-\dfrac{x}{K}\right)-
\dfrac{m x y}{x^2+b}, \\[8pt]
\dot{y}=sy\left(1-\dfrac{y}{h x}\right).
\end{array}
\right.
\end{eqnarray}
Noting that equilibrium  $E(x^*,y^*)$ of system \eqref{dai1.2} is reduced to the  equilibrium  $E^{*}(1, 1)$ of system \eqref{dai2.4}, we have $m=(b+1)\left(1-1/K\right)$ and  $h=1$.
That is to say, system \eqref{dai2.4} can be written as
\begin{eqnarray}\label{dai2.5}
\left\{
\begin{array}{ll}
\dot{x}=x\left(1-\dfrac{x}{K}\right)-
\dfrac{(K-1)(b+1)x y}
{K(x^2+b)}, \\[8pt]
\dot{y}=sy\left(1-\dfrac{y}{x}\right).
\end{array}
\right.
\end{eqnarray}
Note that $K>x^{*}$ becomes $K>1$, and $b>0$, $s>0$ remain the same when system \eqref{dai1.2} is transformed into \eqref{dai2.5}. Thus, throughout the rest of this paper, we assume that  the parameters $K$, $b$, $s$ satisfy the following conditions
\begin{equation}\label{cond}
\begin{aligned}
K>1,\quad b>0,\quad s>0.
\end{aligned}
\end{equation}
Since the transformation \eqref{dai2.1} is a linear sign-reserving transformation,  system \eqref{dai2.5} has the same qualitative property as the system \eqref{dai1.2}. Note that system \eqref{dai2.5} is not well-defined at $x=0$.
At the same time, according to the biological meaning of this model, we only need to consider system \eqref{dai2.5} in $\mathbb{R}^+_2=\{(x,  y) : x> 0,\  y\geq0\}$. It's  standard  to  show  that  all solutions of  \eqref{dai2.5} with positive initial values are positive and bounded, and  will eventually tend into the region $\Omega=\{(x(t),  y(t)): 0 <x(t)< K \ \textrm{and} \ 0\leq y(t)< K\}$. Therefore, system \eqref{dai2.5} is well-defined in a subset of $\mathbb{R}^+_2$.

Note that the fact that system \eqref{dai2.5} cannot be linearized at the origin $(0, 0)$
might make system \eqref{dai2.5} have very rich and complicated dynamics, see \cite{Xiao2001} for instance. In addition, the computation of  Lyapunov constants at an equilibrium (see Section 2.3) is generally applied for polynomial differential systems. Therefore, we need to transform system \eqref{dai2.5}  into an equivalent polynomial differential system. By scaling
\begin{equation*}%\label{cond}
\begin{aligned}
d\tau=\frac{dt}{K x (x^2+b)},
\end{aligned}
\end{equation*}
(we will still use $t$ to denote $\tau$ for ease of notation) system \eqref{dai2.5} is transformed into  the following quintic polynomial differential system
\begin{eqnarray}\label{dai2.7}
\left\{
\begin{array}{ll}
\dot{x}=(x^2+b)\left(K-x\right)x^2-(K-1)(b+1)x^2y, \\[6pt]
\dot{y}=K s y\left(x-y\right)(x^2+b).
\end{array}
\right.
\end{eqnarray}
Clearly, system \eqref{dai2.7} has the same topological structure as system \eqref{dai2.5} in $\mathbb{R}^+_2$  since $K x (x^2+b)>0$ for  $x>0$. In the following we only need to consider system \eqref{dai2.7} in $\mathbb{R}^+_2$ with parameters satisfying  \eqref{cond}. It's obvious  that  solutions of  \eqref{dai2.7} with positive initial values are positive and bounded and  will eventually tend into the region $\Omega$.

\subsection{Analysis of equilibria  of  system \eqref{dai2.7}}

Clearly, system \eqref{dai2.7} always has a boundary equilibrium $E_0=(K, 0)$ and this equilibrium is a hyperbolic saddle for all parameters satisfying \eqref{cond}. In addition, $E_0$ divides  the positive $x$-axis into two parts which are two stable manifolds of $E_0$ and there exists a unique unstable manifold of $E_0$ in the interior of $\mathbb{R}^2_+$.

Next  we are going to study the positive (i.e., interior) equilibria of system \eqref{dai2.7}. Assume $\tilde{E}(\tilde{x}, \tilde{y})$ is a positive equilibrium of system \eqref{dai2.7}, then $y=\tilde{x}$ and $\tilde{x}$ is a positive root of the equation
\begin{equation}\label{dai2.8}
\begin{aligned}(x-1)\left(x^2-(K-1)x+K b\right)=0
\end{aligned}
\end{equation}
on the interval $(0, K)$.
 The Jacobian matrix at the  positive equilibrium $E^*(1, 1)$ is
$$\textrm{J}(E^*)=\left(\begin{array}{cc}
2 K-b-3 & - K b-K+b+1\\
K s (b+1)  & -K s (b+1)
\end{array}\right).
$$
Then the characteristic polynomial of $\textrm{J}(E^*)$ is
\begin{equation*}
\Theta(\lambda)=\lambda^2-\textrm{Tr}\left(J(E^{*})\right)\lambda+\textrm{Det}\left(J(E^{*})\right),
\end{equation*}
where
\begin{equation}\label{TR}
\textrm{Tr}\left(J(E^{*})\right)=
2\,K-b-3-K \left( b+1 \right)s,\quad
\textrm{Det}\left(J(E^{*})\right)=
K s \left( b+1 \right)  \left( K b-K+2 \right).
\end{equation}
Therefore, $E^*(1, 1)$ is a saddle if $\textrm{Det}\left(J(E^{*})\right)<0$, a degenerate equilibrium if $\textrm{Det}\left(J(E^{*})\right)=0$, and  an elementary equilibrium if $\textrm{Det}\left(J(E^{*})\right)>0$, respectively. If $\textrm{Det}\left(J(E^{*})\right)>0$, i.e.,
$K b-K+2>0$, a straightforward calculation shows that
\begin{equation}\label{dai2.10}
\left(\textrm{Tr}\left(J(E^{*})\right)\right)^2-4 \textrm{Det}\left(J(E^{*})\right)=K^2(b+1)^2s^2-2 K (b+1) (2 K b-b+1)s+(2 K-b-3)^2\triangleq \psi(s).
\end{equation}
Note that $\bar{\Delta}=\left(-2 K (b+1) (2 K b-b+1)\right)^2-4 K^2(b+1)^2 (2 K-b-3)^2=
16\,{K}^{2} ( b+1) ^{3} (K-1) (K b-K+2)>0$.
From \eqref{cond} and \eqref{dai2.10}, $\psi(s)$ has two positive zeros at $s_1$ and $s_2$ with $0<s_1<s_2$.
Furthermore, $\psi(s)\geq 0$ for $s\in(0, s_1]\cup[s_2, +\infty)$ and $\psi(s)<0$ for $s\in(s_1, s_2)$, respectively. Hence, $E^*$ is a node if $s\in(0, s_1)\cup(s_2, +\infty)$ and a focus or center if $s\in(s_1, s_2)$, respectively. The stability of $E^*$ is determined by $\textrm{Tr}\left(J(E^{*})\right)$.

Summarizing the above  discussions, we arrive at the following results.
\begin{lemm}\label{L2.1} Let $s^{*}=(2 K-b-3)/(K (b+1))$ be the zero of $\textrm{Tr}\left(J(E^{*})\right)$ of $s$.
\begin{itemize}
\item[(a)]Suppose $K b-K+2<0$, then $E^{*}(1, 1)$ is a hyperbolic saddle;
\item[(b)] Suppose $K b-K+2=0$, then $E^{*}(1, 1)$ is a degenerate equilibrium;
\item[(c)] Suppose $K b-K+2>0$, then $E^{*}(1, 1)$ is a node if $s\in(0, s_1)\cup(s_2, +\infty)$ and a focus or
center if $s\in(s_1, s_2)$, respectively. More precisely, the following results hold.
\begin{itemize}
\item[(c1)]If $2 K-b-3\leq 0$, then $E^{*}(1, 1)$ is a locally asymptotically stable  node or focus;
\item[(c2)]  Let  $2 K-b-3>0$.
\begin{itemize}
\item[(i)] If  $s>s^{*}$, then $E^{*}(1, 1)$ is a locally asymptotically stable node or focus;
\item[(ii)]  If $s<s^{*}$, then $E^{*}(1, 1)$ is an unstable node or focus;
\item[(iii)]If $s=s^{*}$, then $E^{*}(1, 1)$ is a weak focus or center.
\end{itemize}
\end{itemize}
\end{itemize}
\end{lemm}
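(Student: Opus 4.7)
The plan is to read the classification of $E^*(1,1)$ directly off the trace and determinant of $\mathrm{J}(E^*)$ computed in \eqref{TR}, splitting into cases according to the sign of $\mathrm{Det}(J(E^*))$ and (in the focus/center regime) the sign of $\mathrm{Tr}(J(E^*))$. Since $Ks(b+1)>0$ under \eqref{cond}, the signs of $\mathrm{Det}(J(E^*))$ and $\mathrm{Tr}(J(E^*))$ depend only on the factors $Kb-K+2$ and $2K-b-3-Ks(b+1)$, so the three-way split in the statement follows the trichotomy $Kb-K+2<0,\,=0,\,>0$.

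For parts (a) and (b), I would just quote \eqref{TR}: when $Kb-K+2<0$ the determinant is negative, so $E^*$ is a hyperbolic saddle; when $Kb-K+2=0$ the determinant vanishes, giving a (non-hyperbolic) degenerate equilibrium. Both conclusions are immediate from the standard Hartman--Grobman/classification-of-planar-equilibria facts and require no further computation.

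For part (c), assume $Kb-K+2>0$, so $\mathrm{Det}(J(E^*))>0$ and $E^*$ is elementary. I would then use the discriminant $\psi(s)$ defined in \eqref{dai2.10} to separate real from complex eigenvalues. The preceding paragraph already records that $\bar\Delta>0$ under \eqref{cond} with $K>1$ and $Kb-K+2>0$, and Vieta's formulas applied to $\psi$ give product of roots $(2K-b-3)^2/(K^2(b+1)^2)\ge 0$ and sum of roots $2(2Kb-b+1)/((b+1)K)>0$, so the two real zeros $s_1,s_2$ of $\psi$ lie in $[0,\infty)$ with $\psi\ge 0$ on $(0,s_1]\cup[s_2,\infty)$ and $\psi<0$ on $(s_1,s_2)$. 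This yields the node/focus dichotomy. Stability is then read from $\mathrm{Tr}(J(E^*))=(2K-b-3)-Ks(b+1)$: in (c1) this is nonpositive for every $s>0$ (with equality impossible because $Ks(b+1)>0$), so $\mathrm{Tr}<0$ and $E^*$ is asymptotically stable; in (c2) the trace vanishes precisely at $s=s^*$, is positive for $s<s^*$ and negative for $s>s^*$, giving the three subcases (i)--(iii). At $s=s^*$ one has $\mathrm{Tr}=0$, $\mathrm{Det}>0$, so the eigenvalues are purely imaginary and $E^*$ is either a center or a weak focus.

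The only point that requires a little care, rather than pure bookkeeping, is verifying that $s^*$ actually lies in the focus interval $(s_1,s_2)$ so that subcase (iii) is nonvacuous; this is a short algebraic check using $\psi(s^*)=-4\,\mathrm{Det}(J(E^*))<0$, which places $s^*$ strictly between $s_1$ and $s_2$. I do not expect any genuine obstacle here: the lemma is essentially a compact tabulation of the trace/determinant analysis already laid out just above its statement, and the deeper question of distinguishing a true center from a weak focus in case (c2)(iii) is deferred to the subsequent Lyapunov-constant computations.
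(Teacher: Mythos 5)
Your proposal is correct and follows essentially the same route as the paper: the classification is read off from the trace and determinant in \eqref{TR} together with the discriminant $\psi(s)$ of \eqref{dai2.10}, exactly as in the paragraph preceding the lemma (the paper's formal proof simply cites that computation and Section II.2 of \cite{ZhangZF1992}). Your extra verification that $\psi(s^*)=-4\,\textrm{Det}\left(J(E^{*})\right)<0$, hence $s^*\in(s_1,s_2)$ and subcase (iii) is nonvacuous, is a small but worthwhile addition that the paper leaves implicit.
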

\begin{proof}
Under the conditions \eqref{cond}, it follows from Eq. \eqref{TR} and Eq. \eqref{dai2.10} that  the conclusions of Lemma \ref{L2.1} are clearly verified by applying the results in Section II.2 in \cite{ZhangZF1992}.
\end{proof}
The number of positive  equilibria of
system \eqref{dai2.7} is determined by the number of positive roots of Eq. \eqref{dai2.8} on the interval $(0, K)$.
Noting that the value $1$ is always a positive root of Eq. \eqref{dai2.8},
the equation \eqref{dai2.8} can have one, two or
three positive roots in the interval $(0, K)$ which can be evaluated by using the root formula of the quadratic polynomial. Correspondingly, system \eqref{dai2.7} can have one, two, or three positive equilibria.
Applying the results of Lemma \ref{L2.1}, after tedious analysis, we have the following results which are equivalent to Lemma 2.1 of \cite{Li2007}.
\begin{lemm}\label{L2.2}
Let $\overline{\Delta}=(K-1)^2-4Kb$.
\begin{itemize}
\item[(a)]Suppose  $b>(K-1)^2/(4K)$, then system \eqref{dai2.7} has a unique positive equilibrium $E^{*}(1, 1)$, which is an elementary  anti-saddle equilibrium;
\item[(b)]Suppose $b=(K-1)^2/(4K)$.
\begin{itemize}
\item[(b1)] If $K=3$, i.e., $b=1/3$, then system \eqref{dai2.7} has a unique positive equilibrium $E^{*}(1, 1)$,  which is a degenerate equilibrium;
\item[(b2)] If $K\neq 3$, then system \eqref{dai2.7} has two different positive equilibria: a degenerate equilibrium $E_2^{*}((K-1)/2, (K-1)/2)$ and an elementary anti-saddle equilibrium $E^{*}(1, 1)$;
\end{itemize}
\item[(c)] Suppose $b<(K-1)^2/(4K)$.
\begin{itemize}
\item[(c1)] If $b=1-2/K$, then system \eqref{dai2.7} has two different positive equilibria: a degenerate  equilibrium $E^{*}(1, 1)$ and an elementary anti-saddle equilibrium $E_2^{*}(K-2, K-2)$;
\item[(c2)] If $b\neq 1-2/K$, then system \eqref{dai2.7} has three different positive equilibria $E^{*}(1, 1)$, $E_2^{*}((K-1-\sqrt{\overline{\Delta}})/2), (K-1-\sqrt{\overline{\Delta}})/2)$ and $E_3^{*}((K-1+\sqrt{\overline{\Delta}})/2), (K-1+\sqrt{\overline{\Delta}})/2))$,  which are all elementary equilibria. Furthermore,  $E^{*}$ and $E_3^{*}$ are both anti-saddle, and $E_2^{*}$ is a saddle, provided that 1 is the minimum root of  Eq. \eqref{dai2.8}.
\end{itemize}
\end{itemize}
\end{lemm}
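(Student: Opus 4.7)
The plan is to reduce the classification to counting and distinguishing the roots in $(0,K)$ of the cubic
\[ P(x) = (x-1)\bigl(x^2 - (K-1)x + Kb\bigr) = x^3 - Kx^2 + (Kb+K-1)x - Kb, \]
since every positive equilibrium of \eqref{dai2.7} is of the form $(\tilde x,\tilde x)$ with $P(\tilde x)=0$, and then to read off the type of each equilibrium from the sign of $P'(\tilde x)$.

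The crucial computational step I would carry out first is the determinant identity
\[ \det J(\tilde x,\tilde x) \;=\; Ks\,\tilde x^3(\tilde x^2+b)\,P'(\tilde x). \]
This follows by writing \eqref{dai2.7} as $\dot x = x^2 R(x,y)$ and $\dot y = y\,S(x,y)$ with $R(\tilde x,\tilde x)=S(\tilde x,\tilde x)=0$; a direct expansion of the $2\times 2$ Jacobian yields the factor $3\tilde x^2 - 2K\tilde x + (Kb+K-1)=P'(\tilde x)$. Since the prefactor is strictly positive, $(\tilde x,\tilde x)$ is elementary iff $\tilde x$ is a simple root of $P$, and among elementary equilibria the sign of $P'(\tilde x)$ decides anti-saddle ($>0$) versus saddle ($<0$). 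At $\tilde x=1$ this is consistent with $P'(1)=Kb-K+2$ appearing in the formula $\textrm{Det}(J(E^*))$ of Lemma \ref{L2.1}.

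Next I would count and locate the roots of $P$ using the discriminant $\overline\Delta$ of the quadratic factor $q(x)=x^2-(K-1)x+Kb$. When $\overline\Delta\geq 0$, the roots $(K-1\pm\sqrt{\overline\Delta})/2$ have positive sum $K-1<K$ and positive product $Kb$, so both lie in $(0,K)$. The three cases of the lemma then correspond to $\overline\Delta<0,\ \overline\Delta=0,\ \overline\Delta>0$. In (a), only $\tilde x=1$ remains; the identity $(K-1)^2/(4K)-(1-2/K)=(K-3)^2/(4K)\geq 0$ forces $b>(K-1)^2/(4K)\Rightarrow Kb-K+2>0$, so Lemma \ref{L2.1}(c) classifies $E^*$ as elementary anti-saddle. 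In (b), the double root $(K-1)/2$ of $q$ coincides with $1$ iff $K=3$: in (b1), $x=1$ is a triple root and $Kb-K+2=0$, giving a unique degenerate $E^*$ via Lemma \ref{L2.1}(b); in (b2), the double root $(K-1)/2\neq 1$ and $Kb-K+2=(K-3)^2/4>0$, giving elementary anti-saddle $E^*$ and degenerate $E_2^*$. In (c) I split on $q(1)=Kb-K+2$: if $b=1-2/K$ (which forces $K>2$ from $b>0$ and $K\neq 3$ from $b<(K-1)^2/(4K)$) then $P(x)=(x-1)^2(x-(K-2))$, yielding a degenerate $E^*$ and an elementary $E_2^*(K-2,K-2)$ with $P'(K-2)=(K-3)^2>0$, hence anti-saddle; if $b\neq 1-2/K$, then $P$ has three distinct simple real roots, and since a monic cubic with three simple real roots has $P'$ positive at the outer roots and negative at the middle one, ordering them $1<r_1<r_2$ under the stated hypothesis places $E^*$ and $E_3^*$ as anti-saddles and $E_2^*$ as a saddle.

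The main obstacle is the tight algebraic interlocking of the parameter conditions: one must repeatedly reconcile the discriminant hypotheses on $b$ with the sign of $Kb-K+2$ (needed to invoke Lemma \ref{L2.1}), and check that the nominally-distinct equilibria are actually distinct in each subcase (e.g.\ $(K-1)/2\neq 1$ in (b2); $K-2>0$ and $K-2\neq 1$ in (c1); and the ordering $1<r_1<r_2$ in (c2)). Once the determinant formula and the identity $(K-1)^2/(4K)-(1-2/K)=(K-3)^2/(4K)$ are in hand, what remains is careful bookkeeping rather than any further dynamical input.
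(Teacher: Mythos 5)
Your proposal is correct, and it fills in precisely the ``tedious analysis'' that the paper omits: the paper simply asserts that Lemma \ref{L2.2} follows from Lemma \ref{L2.1} together with the root formula for the quadratic factor of Eq.~\eqref{dai2.8}, deferring to Lemma 2.1 of \cite{Li2007} for details. The substance is the same in both treatments --- count and locate the roots of $(x-1)(x^2-(K-1)x+Kb)$ in $(0,K)$ via $\overline{\Delta}$, then classify each equilibrium by the sign of the Jacobian determinant --- but you organize it differently. The paper's Lemma \ref{L2.1} only classifies $E^{*}(1,1)$ (implicitly relying on the fact that the normalization \eqref{dai2.1} can send any equilibrium to $(1,1)$ to handle $E_2^{*}$ and $E_3^{*}$), whereas your identity $\det J(\tilde x,\tilde x)=Ks\,\tilde x^{3}(\tilde x^{2}+b)\,P'(\tilde x)$ classifies all equilibria uniformly in one stroke; I have checked it ($R_x(\tilde x,\tilde x)=-3\tilde x^{2}+2K\tilde x-b$, and $-R_x+(K-1)(b+1)=P'(\tilde x)$, consistent with $P'(1)=Kb-K+2$ in \eqref{TR}). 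This buys you the saddle/anti-saddle pattern in case (c2) for free from the elementary fact that a monic cubic with three simple real roots has positive derivative at the outer roots and negative at the middle one, and your bookkeeping identities $(K-1)^{2}/(4K)-(1-2/K)=(K-3)^{2}/(4K)$ and $Kb-K+2=(K-3)^{2}/4$ in case (b2) correctly reconcile the discriminant hypotheses with the sign conditions needed in each subcase. No gaps.
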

The cases (b) and (c1) of Lemma \ref{L2.2}  have been investigated by Li and Xiao \cite{Li2007} and Huang, Xia and Zhang \cite{Huang2016}. In the rest of this  paper  we only need to focus on the two cases (a) and (c2) of Lemma \ref{L2.2}.
Furthermore, noting that the type and stability of $E^{*}(1, 1)$ have been determined for all the cases of Lemma \ref{L2.1} except the subcase (iii) of (c2), we only need to  determine if $E^{*}(1, 1)$ is a center or a weak focus under the corresponding conditions.
\subsection{Computation of Lyapunov constants}
\quad\ \ In this subsection, we are going to study Hopf bifurcation of system \eqref{dai2.7}. Without loss of generality, we suppose that  $E^{*}(1, 1)$ is a center or focus type equilibrium  of system \eqref{dai2.7}, which yields that the condition (iii) in Lemma \ref{L2.1} (c2) holds, i.e.,
\begin{equation}\label{cond2.3}
\begin{aligned}
K b-K+2>0,\quad 2 K-b-3>0,\quad  s=s^{*}.
\end{aligned}
\end{equation}
To determine if $E^{*}(1, 1)$ is a center or a weak focus, one needs to calculate the focal values of system \eqref{dai2.7} at this equilibrium. If $E^{*}(1, 1)$ is a weak focus, we will  determine the direction of Hopf bifurcation and study the number of limit cycles bifurcating from it.
In this paper, we will use the Lyapunov constants instead of the focal values to solve these problems. The equivalence between the Lyapunov constants and the focal values can be seen in \cite{Liu2001} and Chapter $1$ in \cite{Liu2008}, or \cite{Dai2017} for more details.

For convenience, we denote $V_1\triangleq \textrm{Tr}\left(J(E^{*}(1, 1))\right)$. Under the condition: $V_1=0$, i.e., $s=s^{*}$, we will compute the Lyapunov constants of system \eqref{dai2.7} at $E^{*}(1, 1)$. Firstly, shifting $E^{*}(1, 1)$ to the origin by the  transformation $u=x-1$, $v=y-1$, system \eqref{dai2.7} becomes
\begin{eqnarray}\label{dai2.12}
\left\{
\begin{array}{rl}
\dot{u}=&a_{10}u+a_{01}v+a_{20}u^2+a_{11}uv+a_{30}u^3+a_{21}u^2v
+a_{40}u^4-u^5,\\[6pt]
\dot{v}=&b_{10}u+b_{01}v+b_{20}u^2+b_{11}uv+b_{02}v^2+b_{30}u^3+b_{21}u^2v+b_{12}uv^2
+b_{31}u^3v+b_{22}u^2v^2,
\end{array}
\right.
\end{eqnarray}
where
$$a_{10}=2 K-b-3,\ a_{01}=-(K-1)(b+1),\
a_{20}=5 K-2 b-9,\ a_{11}=2a_{01},\ a_{21}=a_{01},\ a_{30}=4 K-10-b,$$
$$a_{40}=K-5, \,
b_{10}=a_{10},\ b_{01}=-a_{10},\,
b_{20}=2 a_{10}/(b+1),\, b_{11}=(b-1)b_{20}/2,\, b_{02}=-a_{10},\, b_{12}=-b_{20},$$
$$b_{30}=b_{21}=b_{31}=-b_{22}=b_{20}/2.$$
Applying the algorithm in \cite{Sang2016} and with the help of the software Maple, we
get  the first two Lyapunov constants of system \eqref{dai2.7} at the equilibrium $E^{*}(1, 1)$ as follows:
\begin{equation}\label{dai2.13}
\begin{aligned}
V_3=\dfrac{(K-1)^2}{4(K b-K+2)}\upsilon_1,\quad \quad
V_5=-\dfrac{(K-1)^3}{48 (K b-K+2)^3(2K-b-3)}\upsilon_2,
%V_7=\dfrac{(K-1)^4}{9216d^5e^2}p_3,
\end{aligned}
\end{equation}
where the quantity $V_5$ is reduced w.r.t. the Gr\"{o}bner basis of $\{V_3\}$, $\upsilon_1$ and $\upsilon_2$ are given in the Appendix A.

\section{Asymptotic behavior of system \eqref{dai2.7} near the origin}

Since we only need to consider system \eqref{dai2.7} in $\mathbb{R}^+_2$,
the qualitative behavior near the origin is important to the global  dynamic behaviors. In this section, we study the singularity $(0, 0)$ of system \eqref{dai2.7} and give all possibilities for the orbits of system \eqref{dai2.7} approach to $(0, 0)$ as $t\rightarrow+\infty$ or $t\rightarrow-\infty$ depending on all parameters.  As a consequence, a sufficient condition for the existence of limit cycle is given in Theorem \ref{T3.2}.

Let  $X_2(x, y)+\Phi(x, y)$ and $Y_2(x, y)
+\Psi(x, y)$ be the right-hand side of the first and second equation in system \eqref{dai2.7}, respectively. Among them, $X_2(x, y)=K b {x}^{2}$, $Y_2(x, y)=K b s x y- K b s {y}^{2}$, which are both homogeneous polynomials in $x$ and $y$ of degree $2$, and $\Phi(x, y)=o(r^2)$,  $\Psi(x, y)=o(r^2)$ as $r\rightarrow 0$, where $x=r\cos \theta, \, y=r\sin \theta$. By  Theorems 3.4, 3.7, 3.8 and 3.10 in \cite{ZhangZF1992}, we obtain  the following theorem.
\begin{theo}\label{T3.1}  Let  $S^+_{\delta} (O)=\{(r, \theta):0<r<\delta,\ 0\leq\theta\leq \pi/2\}$ with  $0<\delta\ll 1$. The following statements hold.
\begin{itemize}
\item[(1)] If  $0<s<1$, then
\begin{itemize}
\item[(a)] the positive $x$-axis is a unique orbit of system \eqref{dai2.7} tending to $(0, 0)$ along $\theta=0$ as  $t\rightarrow-\infty$;
\item[(b)] the positive $y$-axis is a unique orbit of system \eqref{dai2.7} tending to $(0, 0)$ along $\theta=\pi/2$ as $t\rightarrow+\infty$.
\end{itemize}
The phase portrait of system \eqref{dai2.7} near $(0, 0)$ is shown in Fig. 3.1. (1).
\item[(2)] If $s=1$, then
\begin{itemize}
\item[(a)] there is an infinite number of orbits of system \eqref{dai2.7} in $S^+_{\delta} (O)$ tending to $(0, 0)$ along $\theta=0$ as $t\rightarrow-\infty$;
\item[(b)]  the positive $y$-axis is a unique orbit of system \eqref{dai2.7} tending to $(0, 0)$ along $\theta=\pi/2$ as $t\rightarrow+\infty$.
\end{itemize}
The phase portrait of system \eqref{dai2.7} near $(0, 0)$ is shown in Fig. 3.1. (2).
\item[(3)] If $s>1$, then
\begin{itemize}
\item[(a)]  there is an infinite number of orbits of system \eqref{dai2.7} in $S^+_{\delta} (O)$ tending to $(0, 0)$ along  $\theta=0$ as $t\rightarrow-\infty$;
\item[(b)]  the positive $y$-axis is a unique orbit of system \eqref{dai2.7} tending to $(0, 0)$ along $\theta=\pi/2$ as
  $t\rightarrow+\infty$;
\item[(c)] there is a unique orbit of system \eqref{dai2.7} tending to $(0, 0)$ along $\theta_{s}=\arctan ((s-1)/s)$as $t\rightarrow-\infty$. And this orbit is a separatrix that divides $S^+_{\delta} (O)$ into two parts. One part is a hyperbolic sector,
and the other part is a parabolic sector.
\end{itemize}
The phase portrait of system \eqref{dai2.7} near $(0, 0)$ is shown in Fig. 3.1. (3).
\end{itemize}
\end{theo}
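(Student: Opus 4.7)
The plan is to apply the classification theorems for isolated singularities with vanishing linear part but nonvanishing homogeneous quadratic part, specifically Theorems 3.4, 3.7, 3.8 and 3.10 of \cite{ZhangZF1992}. Passing to polar coordinates $x=r\cos\theta$, $y=r\sin\theta$, the characteristic directions along which orbits can approach the origin are the zeros of
\begin{equation*}
G(\theta):=\cos\theta\,Y_2(\cos\theta,\sin\theta)-\sin\theta\,X_2(\cos\theta,\sin\theta).
\end{equation*}
With $X_2=Kb\,x^2$ and $Y_2=Kbs\,y(x-y)$, a direct factorization gives
\begin{equation*}
G(\theta)=Kb\sin\theta\cos\theta\bigl[(s-1)\cos\theta-s\sin\theta\bigr].
\end{equation*}
Restricted to $[0,\pi/2]$, the zeros are $\theta=0$ and $\theta=\pi/2$ in every case, together with the root $\theta_s=\arctan((s-1)/s)\in(0,\pi/2)$ when $s>1$; when $s=1$ this extra root collapses onto $\theta=0$, so that $\theta=0$ becomes a double zero of $G$. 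This trichotomy matches exactly the three cases of the theorem.

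Next I would exploit that both coordinate semi-axes are invariant under the flow. On $\{y=0\}$ one has $\dot x=(x^2+b)(K-x)x^2>0$ for $0<x<K$, so the positive $x$-axis is a single orbit of system \eqref{dai2.7} reaching $(0,0)$ along $\theta=0$ only as $t\to-\infty$. On $\{x=0\}$ one has $\dot y=-Kbs\,y^2<0$ for $y>0$, so the positive $y$-axis is a single orbit reaching $(0,0)$ along $\theta=\pi/2$ only as $t\to+\infty$. This yields parts (a) and (b) of cases (1) and (2), and part (b) of case (3).

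It remains to analyze orbits in the interior of $S^+_\delta(O)$. For $0<s<1$ the two characteristic directions $\theta=0$ and $\theta=\pi/2$ are simple zeros of $G$; applying the normal-sector criterion (Theorem~3.4 of \cite{ZhangZF1992}) after a directional blow-up shows that no interior orbit becomes tangent to either one, yielding the saddle-like portrait of Fig.~3.1.~(1). For $s=1$, $\theta=0$ is a double zero of $G$; the higher-order criteria (Theorems~3.7--3.8 of \cite{ZhangZF1992}) applied after the substitution $y=ux$ detect a full parabolic sector attached to $\theta=0$, so that infinitely many orbits become tangent to the positive $x$-axis as $t\to-\infty$, giving Fig.~3.1.~(2). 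For $s>1$ all three zeros of $G$ are simple; the axis directions behave as in case (1), while the intermediate ray $\theta_s$ is treated by computing $H(\theta_s):=(\cos\theta\,X_2+\sin\theta\,Y_2)|_{\theta=\theta_s}$ and invoking Theorems~3.8 and 3.10 of \cite{ZhangZF1992}. A short computation gives $H(\theta_s)=Kbs/\sqrt{2s^2-2s+1}>0$, so those theorems yield a unique separatrix orbit tangent to $\theta_s$ that reaches $(0,0)$ as $t\to-\infty$; the sector between $\theta=0$ and $\theta=\theta_s$ is parabolic (accommodating the infinite family in part (a)), while the sector between $\theta_s$ and $\pi/2$ is hyperbolic, matching Fig.~3.1.~(3).

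The main obstacle will be the careful verification of the hypotheses of Theorems~3.7--3.10 at each exceptional ray, namely, that the first non-vanishing coefficient in the directional blow-up equation has the correct sign (so that the sector type is uniquely selected) and that $H$ does not vanish on the ray (so that the time orientation is unambiguous). I would carry this out by the substitution $y=\tan(\theta_\ast)\,x+u$ at each critical ray $\theta_\ast\in\{0,\pi/2,\theta_s\}$, expanding $du/dx$ to the order required by the corresponding theorem, and reading off the sector type and time direction from the sign of the leading coefficient.
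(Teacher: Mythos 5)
Your proposal is correct and follows essentially the same route as the paper, which simply identifies the quadratic lowest-order terms $X_2=Kb\,x^2$, $Y_2=Kbs\,y(x-y)$ and invokes Theorems 3.4, 3.7, 3.8 and 3.10 of the cited qualitative-theory monograph; your factorization of the characteristic function $G(\theta)$ and the value $H(\theta_s)=Kbs/\sqrt{2s^2-2s+1}>0$ both check out, so you have in fact supplied more detail than the paper itself does.
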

From Theorem \ref{T3.1}, for any case, every solution $(x(t),  y(t))$ of system \eqref{dai2.7} with positive initial values will eventually be away from the origin (see Fig. 3.1). Note that the $y$-axis is an invariant straight line of system \eqref{dai2.7}. Therefore, for any case, the bound of the region $\Omega$ can be used as the outer boundary of a Poincar\'{e}-Bendixson annular region. Thus, we can get the following results by applying the Poincar\'{e}-Bendixson Theorem \cite{ZhangZF1992}.
\begin{theo}\label{T3.2}
\ If $E^{*}(1, 1)$ is the unique positive equilibrium of system \eqref{dai2.7} and is unstable, then there is at least one stable limit cycle in $\mathbb{R}^2_+$.
\end{theo}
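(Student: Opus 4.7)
The plan is a standard Poincar\'e--Bendixson argument: construct a compact positively invariant annular region $\mathcal{A}$ in the open first quadrant containing no equilibrium of \eqref{dai2.7}, and conclude that every positive semi-orbit in $\mathcal{A}$ tends to a periodic orbit. Equivalently, it suffices to show that for an interior orbit $\phi$ starting off $E^{*}$, the $\omega$-limit set $\omega(\phi)$ is a nonempty, compact, connected invariant set (guaranteed by boundedness inside $\Omega$) that contains no equilibrium of \eqref{dai2.7}.

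For the inner boundary, the hypothesis and Lemma \ref{L2.2}(a) force $E^{*}(1,1)$ to be elementary and anti-saddle, so $\det J(E^{*})>0$. Instability then leaves two options from Lemma \ref{L2.1}(c): either $\mathrm{Tr}\,J(E^{*})>0$ (unstable node or focus), or $\mathrm{Tr}\,J(E^{*})=0$ with positive first nonzero Lyapunov constant (unstable weak focus). In either case a sufficiently small closed disk around $E^{*}$ is negatively invariant, so I obtain a small closed curve $\Gamma_{\mathrm{in}}$ transverse to the flow on which the vector field points strictly outward away from $E^{*}$. For the outer boundary I would refine the information that positive solutions eventually enter $\Omega$ into a closed, piecewise smooth, inward-transverse curve $\Gamma_{\mathrm{out}}\subset\{x>0,\,y>0\}$. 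The axes are invariant and $E^{*}$ is the only interior equilibrium, so only the boundary equilibria $(0,0)$ and $E_{0}=(K,0)$ could in principle obstruct compactness. The hyperbolic saddle $E_{0}$ is ruled out because its stable manifold is exactly the invariant positive $x$-axis, disjoint from the interior orbit of $\phi$. The origin is ruled out by Theorem \ref{T3.1}, which gives in every case $s<1$, $s=1$, $s>1$ the precise sectorial picture and, in particular, says that the $y$-axis is the unique orbit entering $(0,0)$ forward in time; hence interior orbits eventually leave a small neighborhood of $(0,0)$, and I can insert a short transversal arc into $\Gamma_{\mathrm{out}}$ near the origin closing it off in the open quadrant.

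The annulus $\mathcal{A}$ between $\Gamma_{\mathrm{out}}$ and $\Gamma_{\mathrm{in}}$ is compact, positively invariant, and contains no equilibrium, so the Poincar\'e--Bendixson theorem yields a periodic orbit of \eqref{dai2.7} in $\mathcal{A}$. Taking the outermost such periodic orbit $\Gamma$, every orbit starting between $\Gamma$ and $\Gamma_{\mathrm{out}}$ has $\omega$-limit equal to $\Gamma$ (no equilibria and no other periodic orbits are available), so $\Gamma$ is a stable limit cycle in $\mathbb{R}^{2}_{+}$. The main obstacle is producing $\Gamma_{\mathrm{out}}$ near $(0,0)$ when $s\ge 1$: in the case $s>1$ the origin admits a hyperbolic sector and a parabolic sector meeting at $(0,0)$, so one cannot simply cut a small circular arc without worrying about the separatrix direction $\theta_{s}=\arctan((s-1)/s)$. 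The explicit sectorial description in Theorem \ref{T3.1} is exactly what allows the transversal arc to be inserted correctly; once this is done, the rest of the proof is a routine application of Poincar\'e--Bendixson.
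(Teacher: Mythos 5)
Your proposal is correct and follows essentially the same route as the paper: the paper likewise builds a Poincar\'e--Bendixson annulus whose outer boundary comes from the region $\Omega$ (using Theorem \ref{T3.1} and the invariance of the $y$-axis to exclude the origin) and whose inner boundary is a small curve around the unstable equilibrium $E^{*}(1,1)$. If anything, your handling of the transversal arc near $(0,0)$ when $s\ge 1$, of the saddle $E_0$, and of the outermost periodic orbit is more detailed than the paper's one-paragraph justification.
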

\begin{figure}\begin{center}
{\includegraphics[height=5cm,width=13cm]{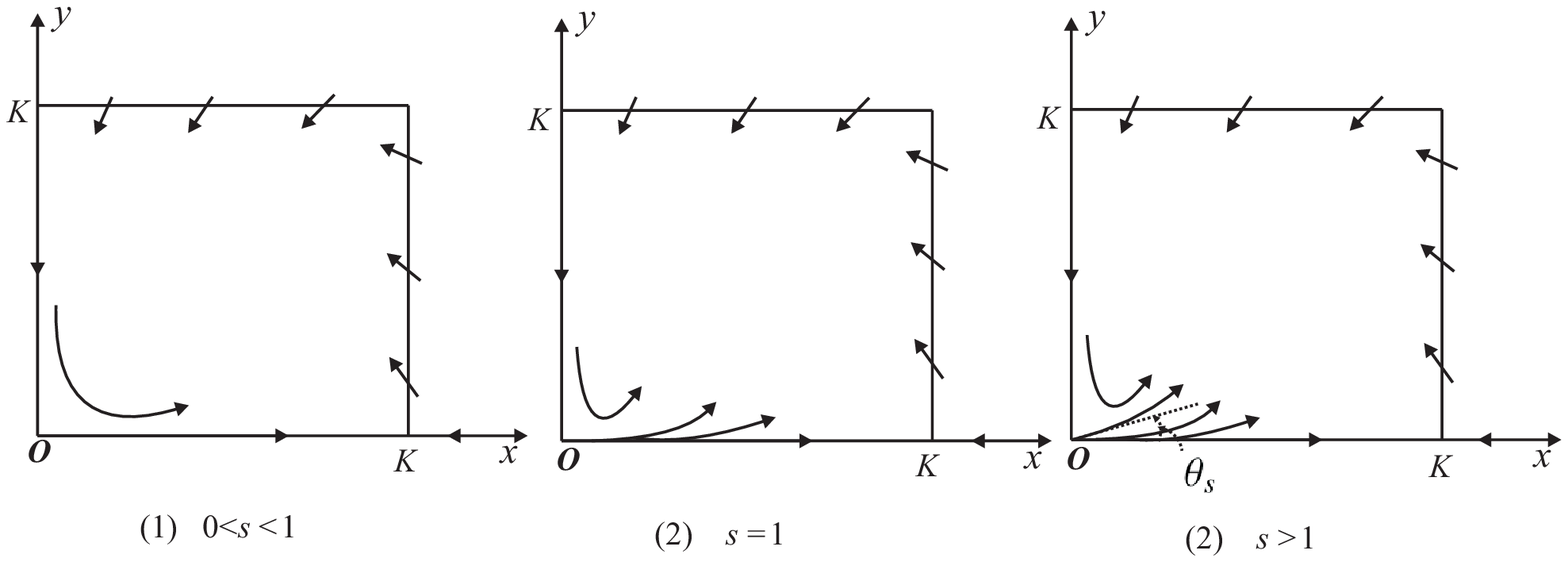}}
\end{center}
{\small {\bf Fig. 3.1.} Vector field on the boundary of $\Omega$ and the topological structure of the orbits of system \eqref{dai2.7} near the origin  for all cases. }
\end{figure}

If  the conditions in   Lemma \ref{L2.2} (a) and  (ii) of Lemma \ref{L2.1} (c2)   hold, then we have \begin{equation}\label{dai3.2}
\begin{aligned}
\frac{(K-1)^2}{4K}<b<2K-3, \quad  s<s^{*}.
\end{aligned}
\end{equation}
The first inequality of \eqref{dai3.2} implies $K>(5+4\sqrt{2})/7$. It follows from Lemmas \ref{L2.1} and \ref{L2.2} that, if the conditions \eqref{dai3.2} hold, then $E^{*}(1, 1)$ is the unique positive equilibrium of system \eqref{dai2.7} which is unstable. From Theorem \ref{T3.2}, we know that system \eqref{dai2.7} has at least one stable limit cycle in $\mathbb{R}^+_2$.
\begin{rema}\label{rem1}
From Theorem \ref{T3.1} and the Poincar\'{e}-Bendixson Theorem \cite{ZhangZF1992}, it's easy to see that the system \eqref{dai2.7} is uniformly persistent in the interior of $\mathbb{R}^+_2$.
\end{rema}

\section{Hopf bifurcation and global dynamics}

In this section, we are going to focus on the two cases listed in  (a) and (c2) of Lemma \ref{L2.2}, i.e., the cases (I) when system \eqref{dai2.7} has a unique non-degenerate positive equilibrium, and (II) when system \eqref{dai2.7} has three distinct positive equilibria. We will consider the Hopf bifurcation and global dynamics of system \eqref{dai2.7} for these two cases.
\subsection{The case when system \eqref{dai2.7} has a unique non-degenerate positive equilibrium}

In this subsection, we first consider system \eqref{dai2.7} with a unique non-degenerate positive equilibrium, which implies the condition in Lemma \ref{L2.2} (a) holds.
The condition $b>(K-1)^2/(4K)$ implies $K b-K+2>0$. From Lemma \ref{L2.1}, it follows that this positive equilibrium is anti-saddle.
We study the Hopf bifurcation and global stability of this equilibrium in the following two subsections, respectively.
\subsubsection{Hopf bifurcation at the unique positive equilibrium}

We first study the Hopf bifurcation at the unique positive equilibrium by applying the Lyapunov constants expressions \eqref{dai2.13}. Assume   that the conditions in (iii) of Lemma \ref{L2.1} (c2) holds.

Note that there are two parameters  $K$ and $b$ involved in  the Lyapunov constants $V_{3}$ and $V_{5}$ (cf.  \eqref{dai2.13}).
We first consider the possibility  whether there exist some values of parameters such that $V_{3}=V_{5}=0$, and hence that the equilibrium $E^{*}(1, 1)$ of system \eqref{dai2.7} is either a center or a weak focus of order $3$ or more.
Moreover, if $E^{*}(1, 1)$ is a weak focus, we will study the number and distribution of limit cycles for  this  system. Our results are as follows.
\begin{theo}\label{T4.1}
If the unique non-degenerate positive equilibrium $E^{*}(1, 1)$ of system \eqref{dai2.7} is center or focus type, then
\begin{itemize}
\item[(1)]it cannot be a center and is a weak  focus of order at most $2$;
\item[(2)] there exist some different parameter values such that system \eqref{dai2.7} has $i$ small limit cycle(s) around $E^{*}(1, 1)$ for $i=1, 2$.
\end{itemize}
\end{theo}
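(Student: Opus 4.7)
The plan is to exploit the Lyapunov constants $V_3,V_5$ from (2.13), working under the Hopf condition $V_1=0$ (i.e.\ $s=s^*$) together with the admissibility $K>(5+4\sqrt 2)/7$ and $(K-1)^2/(4K)<b<2K-3$ inherited from Lemma \ref{L2.2}(a) and (iii) of Lemma \ref{L2.1}(c2). Under these constraints the prefactors of $\upsilon_1,\upsilon_2$ in (2.13) are nonzero of definite sign, so the vanishing of $V_3,V_5$ is equivalent to the vanishing of the polynomials $\upsilon_1(K,b),\upsilon_2(K,b)$ listed in Appendix A, and the problem becomes a purely algebraic question about this polynomial system in two variables.

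For Part (1), I would compute in Maple the resultant
\[
R(K):=\operatorname{Res}_b\bigl(\upsilon_1(K,b),\upsilon_2(K,b)\bigr),
\]
or equivalently a Gr\"obner basis of $\langle\upsilon_1,\upsilon_2\rangle$ eliminating $b$. The goal is an explicit factorization of $R(K)$ whose factors are sign-definite on $K>(5+4\sqrt 2)/7$, supplemented if necessary by a second resultant against $(2K-3-b)$ to discard any spurious roots falling outside the strip $(K-1)^2/(4K)<b<2K-3$. This will prove that $\{\upsilon_1=\upsilon_2=0\}$ has no admissible solution, so $E^*(1,1)$ can be neither a center nor a weak focus of order $\geq 3$.

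For Part (2), the case $i=1$ is essentially free: by Part (1), $\upsilon_1\not\equiv 0$ on the admissible region, so one can pick $(K_1,b_1)$ with $V_3(K_1,b_1,s^*)\neq 0$, and the classical nondegenerate Hopf bifurcation — using that $\partial V_1/\partial s=-K(b+1)\neq 0$ crosses transversally at $s=s^*$ — produces exactly one small limit cycle for $s$ slightly perturbed past $s^*$ on the side where $\operatorname{sign}(s-s^*)$ opposes $\operatorname{sign}(V_3)$. For $i=2$, I would locate a point $(K_0,b_0)$ in the admissible region with $\upsilon_1(K_0,b_0)=0$, whose existence will be certified by exhibiting a numerical sample (or by a sign change of $\upsilon_1$ between two admissible sample points). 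By Part (1), $\upsilon_2(K_0,b_0)\neq 0$ and so $V_5\neq 0$, and the focus has order exactly $2$. The standard degenerate Hopf argument (cf.\ Chapter 1 of \cite{Liu2008}) then yields two small limit cycles provided the Jacobian
\[
\frac{\partial(V_1,V_3)}{\partial(s,b)}\bigg|_{(K_0,b_0,s^*)}
\]
is nonsingular; since $\partial V_1/\partial s\neq 0$, this reduces to checking $\partial\upsilon_1/\partial b(K_0,b_0)\neq 0$ at the chosen point, a routine numerical verification.

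The main obstacle will be Part (1): the polynomials $\upsilon_1,\upsilon_2$ in Appendix A are bulky, so $R(K)$ will be a high-degree polynomial whose positivity on the admissible $K$-interval must be justified rigorously rather than only observed numerically. The cleanest path is an explicit factorization in which each factor is manifestly sign-definite (or reduces to one after substitution $K=(5+4\sqrt 2)/7+t^2$); if this fails, a Sturm-sequence certificate on the half-line $K>(5+4\sqrt 2)/7$ is a reliable fallback. Once that symbolic step is complete, the remaining steps — exhibiting a sample point on $\upsilon_1=0$ and verifying the rank condition — are straightforward.
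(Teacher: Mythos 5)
Your overall strategy---eliminate $b$ via the resultant of $\upsilon_1,\upsilon_2$, then control the real solutions, and for Part (2) perturb $(K,b,s)$ around a point where $V_3=0$, $V_5\neq 0$---is the same as the paper's. But the plan for Part (1) contains a step that would fail: the resultant $R(K)=\operatorname{Res}_b(\upsilon_1,\upsilon_2)$ is \emph{not} sign-definite on the admissible region, so neither the hoped-for factorization into sign-definite factors nor the Sturm-sequence fallback on $K>(5+4\sqrt2)/7$ can succeed. The paper finds $R(K)=-3221225472\,K^3(K-1)^{13}(K-3)^2\phi_1\phi_2^2\phi_3$ with $\phi_1=K^3-6K^2+9K-3$, $\phi_2=K^2-4K+1$ and a degree-$13$ factor $\phi_3$, and both $\phi_1$ and $\phi_3$ vanish at admissible values of $K$ above which $\upsilon_1$ has admissible zeros in $b$. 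Indeed the point $(K_1,b_1)\approx(3.879,\,3.368)$ on $\{\phi_1=0\}\cap\{\upsilon_1=0\}$ satisfies all the constraints in your strip and forces $R(K_1)=0$; it is precisely the point needed for the two-limit-cycle construction in Part (2), so it cannot be ``discarded as spurious.'' Since $R=0$ is only a necessary condition for a common zero (Collins' theorem), the missing idea is the fiber-by-fiber verification: for each admissible real root of each factor of $R$, isolate the real solutions of the system $\{\phi_i=0,\ \upsilon_1=0\}$ in a rational box (the paper uses \textit{RealRootIsolate} on the associated semi-algebraic system) and then certify $\upsilon_2\neq 0$ on that box by a rigorous sign bound, e.g.\ $\upsilon_2(K,b)\geq \upsilon_2^+(\underline K,\underline b)+\upsilon_2^-(\overline K,\overline b)>0$ as in the paper's monotonicity lemma. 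Without this step Part (1) is not established. Your Part (2) is essentially the paper's argument: it first perturbs $K$ or $b$ off $\upsilon_1=0$ and readjusts $s$ to keep $V_1=0$, then perturbs $s$; your Jacobian condition $\partial(V_1,V_3)/\partial(s,b)\neq 0$ is an equivalent packaging, with $\partial V_1/\partial s\neq 0$ automatic and $\partial\upsilon_1/\partial b\neq 0$ a routine check at the chosen point.
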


Before proving Theorem \ref{T4.1}, we first give some results which are helpful for the proof of our results. Let $\textbf{K}$ be an algebraically closed field. Given two polynomials $A(x_1, \cdots, x_n), B(x_1, \cdots, x_n)\in \textbf{K}[x_1, \cdots, x_n]$, $(x_1, \cdots, x_n)\in {\textbf{K}}^{n}$, of the forms
\begin{equation*}%\label{cond4.1}
A(x_1, \cdots, x_n)=\sum_{i=1}^{k}A_i(x_1, \cdots, x_{n-1})x_n^{i},\quad
B(x_1, \cdots, x_n)=\sum_{i=1}^{l}B_i(x_1, \cdots, x_{n-1})x_n^{i},
\end{equation*}
where both $k$ and $l$ are positive integers.
Denote the Sylvester resultant of $A$ and $B$ with respect to $x_n$, as defined in \cite{GelfandIM1994},  by $\textrm{Res}(A, B, x_n)$.
Then the following lemma holds (see Theorem 5 in \cite{Collins1971}).
\begin{lemm}\label{L4.3}  Denote $C(x_1, \cdots, x_{n-1})\triangleq \textrm{Res}(A, B, x_n)$.
If $(a_1, \cdots, a_{n})$ is a common zero of $A$ and $B$, then $C(a_1, \cdots, a_{n-1})=0$.
Conversely, if $C(a_1, \cdots, a_{n-1})=0$, then at least one of the following statements holds:
\begin{itemize}
\item[(a)] $A_k(a_1, \cdots, a_{n-1})=A_{k-1}(a_1, \cdots, a_{n-1})=\cdots=A_0(a_1, \cdots, a_{n-1})=0$,
\item[(b)] $B_l(a_1, \cdots, a_{n-1})=B_{l-1}(a_1, \cdots, a_{n-1})=\cdots=B_0(a_1, \cdots, a_{n-1})=0$,
\item[(c)] $A_k(a_1, \cdots, a_{n-1})=B_l(a_1, \cdots, a_{n-1})=0$,
\item[(d)] For some $a_{n}\in \textbf{K}$,  $(a_1, \cdots, a_{n})$ is a common zero of $A$ and $B$.
\end{itemize}
\end{lemm}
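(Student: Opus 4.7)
The plan is to work directly with the Sylvester matrix $M$, whose entries are polynomials in $x_1,\ldots,x_{n-1}$ drawn from the coefficient lists $\{A_i\}$ and $\{B_j\}$, and whose determinant is $C$ by definition. Two classical facts about $M$ drive the argument. First, a cofactor expansion on $M$ (equivalently, multiply column $j$ by $x_n^{k+l-j}$ and add into the last column before expanding) produces a Bezout-type identity: there exist $U,V\in\textbf{K}[x_1,\ldots,x_n]$ with $\deg_{x_n}U\le l-1$ and $\deg_{x_n}V\le k-1$ such that $UA+VB=C$ as an identity in $\textbf{K}[x_1,\ldots,x_n]$. Second, the transpose of $M$ represents the $\textbf{K}[x_1,\ldots,x_{n-1}]$-linear map $(p,q)\mapsto pA+qB$ on coefficient vectors of polynomials of $x_n$-degree at most $l-1$ and $k-1$ respectively. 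Neither fact requires any invertibility or non-vanishing of $A_k$ or $B_l$.

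The forward direction is then immediate: if $(a_1,\ldots,a_n)$ is a common zero of $A$ and $B$, substituting into the Bezout identity yields $C(a_1,\ldots,a_{n-1})=0$, since $C$ is independent of $x_n$.

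For the converse, write $\bar A(x_n)=A(a_1,\ldots,a_{n-1},x_n)$ and $\bar B(x_n)=B(a_1,\ldots,a_{n-1},x_n)$, and let $\bar M$ denote the Sylvester matrix with each entry specialized. Every entry of $M$ is an $A_i$, a $B_j$, or zero, so $\det\bar M = C(a_1,\ldots,a_{n-1}) = 0$. Hence $\bar M$ is singular, and the kernel interpretation produces polynomials $p,q\in\textbf{K}[x_n]$ with $\deg p\le l-1$, $\deg q\le k-1$, not both zero, and $p\bar A+q\bar B=0$. I would then run a case analysis. If $\bar A\equiv 0$ we are in case (a); if $\bar B\equiv 0$, case (b). Otherwise, suppose first that $A_k(a_1,\ldots,a_{n-1})\neq 0$, so $\bar A$ has degree exactly $k$. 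Then $\bar A$ divides $q\bar B$; if $\gcd(\bar A,\bar B)=1$ in $\textbf{K}[x_n]$ then $\bar A\mid q$, which together with $\deg q<k$ forces $q=0$ and hence $p=0$, contradicting nontriviality. So $\bar A$ and $\bar B$ share a nonconstant common factor, and since $\textbf{K}$ is algebraically closed they share a root $a_n$, placing us in case (d). The symmetric argument applies when $B_l(a_1,\ldots,a_{n-1})\neq 0$. The only remaining possibility is $A_k(a_1,\ldots,a_{n-1})=B_l(a_1,\ldots,a_{n-1})=0$, which is exactly case (c).

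The subtle point to handle carefully is the degree drop upon specialization: $\bar A$ need not have degree $k$ and may even vanish identically, and similarly for $\bar B$. This is precisely why the four listed cases arise rather than a single clean ``shared root'' conclusion, and why the argument must branch on the vanishing of the leading coefficients before invoking the $\gcd$ argument. I would take care to verify that both the Bezout identity and the kernel description of $M$ are purely linear-algebraic statements over the ring $\textbf{K}[x_1,\ldots,x_{n-1}]$, so that they specialize validly to $\bar M$ over $\textbf{K}$ without any hidden hypothesis on $A_k$ or $B_l$.
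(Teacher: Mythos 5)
Your proof is correct, but there is nothing in the paper to compare it against: the paper does not prove this lemma at all, it simply states it and cites Theorem 5 of Collins' 1971 article on multivariate polynomial resultants. What you have written is essentially the classical argument that Collins' theorem rests on, and it is sound. The forward direction via the Bezout identity $UA+VB=C$ (obtained by column operations on the Sylvester matrix, with no hypothesis on $A_k$ or $B_l$) is standard and specializes correctly because $C$ is free of $x_n$. For the converse, your key observation is the right one: the specialized matrix $\bar M$ is still the matrix of the map $(p,q)\mapsto p\bar A+q\bar B$ on the \emph{formal} degree ranges $\deg p\le l-1$, $\deg q\le k-1$, even when the actual degrees of $\bar A,\bar B$ drop, so singularity of $\bar M$ yields a nontrivial syzygy over the field $\textbf{K}$. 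Your branching — $\bar A\equiv 0$ gives (a), $\bar B\equiv 0$ gives (b), a nonvanishing leading coefficient plus the $\gcd$/degree-count argument forces a common root and hence (d), and the residual branch is exactly (c) — accounts precisely for why the statement has four alternatives rather than a single ``common root'' conclusion. The one point worth making explicit if you write this up formally is that $\textbf{K}$ being algebraically closed is used only in the step converting a nonconstant $\gcd(\bar A,\bar B)$ into an actual shared root $a_n$; everything before that is linear algebra over $\textbf{K}[x_1,\ldots,x_{n-1}]$, as you note.
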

It's obvious that $C=0$ is a necessary, but not sufficient condition of  $A=B=0$. This fact not only gives a criterion for the existence of common zeros, but also provides a method of finding the common zeros of multivariate polynomial systems.
Let $f_1, \cdots, f_m$ be (finitely many) elements of $\textbf{K}[x_1, \cdots, x_n]$.
Denote the algebraic variety of  $f_1, \cdots, f_m$, the set of common zeros of $f_1, \cdots, f_m$,  by $\textbf{V}(f_1, \cdots, f_m)$.
Then it follows from Lemma \ref{L4.3} that
\begin{equation}\label{dai4.1}
\textbf{V}(A, B)=\textbf{V}(A, B, C).
\end{equation}
This equality provides a method of elimination and will be useful for our main  analysis.

Denote $R^n_+=\{(x_1, \cdots, x_n) : x_i>0,\, i=1, \cdots, n\}$.
For any $n$-variate polynomial $f(x)=f(x_1, \cdots, x_n)$ in $R^n_+$, denote the summation of the positive terms in $f(x)$ and that of the negative terms in $f(x)$ by $f^+(x_1, \cdots, x_n)$ and $f^-(x_1, \cdots, x_n)$, respectively. Obviously, $f=f^++f^-$. Note that both $f^+$ and
$f^-$ are monotone in $R^n_+$. The following lemma is given by  Theorem 2.3 of \cite{Lu2007}.
\begin{lemm}\label{L4.3}
\ For given constants $0< a_{i}\leq b_{i}$ $(i=1, \cdots, n)$,\\
(1)\ \ if $f^+(a_1, a_{2},  \cdots, a_n)+f^-(b_1, b_{2},  \cdots, b_n)>0$, then for
any $x_i\in [a_i, b_i]$ $(i=1, \cdots, n)$,
$$f(x_1, x_{2},  \cdots, x_n)>0;$$
(2)\ \ if $f^+(b_1, b_{2},  \cdots, b_n)+f^-(a_1, a_{2},  \cdots, a_n)<0$, then for
any $x_i\in [a_i, b_i]$ $(i=1, \cdots, n)$,
$$f(x_1, x_{2},  \cdots, x_n)<0.$$
\end{lemm}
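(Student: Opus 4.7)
The plan is to reduce both claims to termwise monotonicity of monomials on the positive orthant $R^n_+$ and to bound $f(x_1,\cdots,x_n)$ on the box $\prod_{i=1}^n[a_i,b_i]$ by its values at the two extreme corners $(a_1,\cdots,a_n)$ and $(b_1,\cdots,b_n)$.

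First I would make the decomposition $f=f^++f^-$ precise: writing $f=\sum_\alpha c_\alpha x^\alpha$ in its unique expansion into monomials $x^\alpha=x_1^{\alpha_1}\cdots x_n^{\alpha_n}$ with nonzero coefficients $c_\alpha\in\mathbb{R}$, let $f^+$ collect the terms with $c_\alpha>0$ and $f^-$ collect the terms with $c_\alpha<0$. Since $a_i>0$ for every $i$, each monomial $x^\alpha$ is strictly positive and nondecreasing in every variable on $R^n_+$, hence $a^\alpha\leq x^\alpha\leq b^\alpha$ whenever $a_i\leq x_i\leq b_i$ for all $i$, where $a^\alpha$ and $b^\alpha$ denote $x^\alpha$ evaluated at the corners $(a_1,\cdots,a_n)$ and $(b_1,\cdots,b_n)$, respectively.

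Multiplying these inequalities by $c_\alpha>0$ and summing over the positive-coefficient monomials gives the monotonicity $f^+(a_1,\cdots,a_n)\leq f^+(x_1,\cdots,x_n)\leq f^+(b_1,\cdots,b_n)$; multiplying by $c_\alpha<0$ reverses them, and summing over the negative-coefficient monomials yields $f^-(b_1,\cdots,b_n)\leq f^-(x_1,\cdots,x_n)\leq f^-(a_1,\cdots,a_n)$. Adding the two produces the key two-sided estimate
$$f^+(a_1,\cdots,a_n)+f^-(b_1,\cdots,b_n)\;\leq\;f(x_1,\cdots,x_n)\;\leq\;f^+(b_1,\cdots,b_n)+f^-(a_1,\cdots,a_n)$$
valid at every point of the box. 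Conclusion (1) is then immediate from the lower bound under its hypothesis, and conclusion (2) from the upper bound under its hypothesis.

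There is essentially no serious obstacle here: the entire argument is termwise monotonicity of single monomials, and the strict positivity $a_i>0$ keeps each monomial from changing sign. The only mild point of care is that $f^+$ and $f^-$ must be defined relative to the unique fully expanded monomial representation of $f$, so that within each of $f^+$ and $f^-$ every summand is coordinatewise monotone in the same direction on $R^n_+$ and no internal cancellation can spoil the one-sided evaluation at a single corner.
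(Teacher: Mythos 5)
Your proof is correct and complete. The paper itself offers no proof of this lemma --- it is quoted as Theorem 2.3 of Lu et al. \cite{Lu2007} --- but the termwise-monotonicity argument you give (each monomial is nondecreasing on the box since $a_i>0$, so $f^+$ is nondecreasing and $f^-$ is nonincreasing, yielding the two-sided bound $f^+(a)+f^-(b)\leq f\leq f^+(b)+f^-(a)$) is precisely the content of the paper's preceding remark that both $f^+$ and $f^-$ are monotone in $R^n_+$, so your route is the intended one.
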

This lemma can be used to determine the sign of a multivariate polynomial over an interval, especially over the real root isolation interval.

Now we use the equality \eqref{dai4.1} and Lemma \ref{L4.3} to prove the following results.
\begin{lemm}\label{L4.4} If the conditions of Theorem \ref{T4.1} hold,
then the first two Lyapunov constants of system \eqref{dai2.7} at $E^{*}(1, 1)$
have no common real root.
\end{lemm}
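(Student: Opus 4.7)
The plan is to reduce the problem to showing that the two polynomials $\upsilon_1(K,b)$ and $\upsilon_2(K,b)$ (which, up to nonvanishing factors under the hypotheses of Theorem~\ref{T4.1}, carry all the vanishing information of $V_3$ and $V_5$) have no common real zero in the admissible parameter region
\begin{equation*}
\Omega_0=\left\{(K,b):\tfrac{(K-1)^2}{4K}<b<2K-3,\ K>\tfrac{5+4\sqrt{2}}{7}\right\}.
\end{equation*}
Indeed, under the conditions $Kb-K+2>0$, $2K-b-3>0$ and $K>1$ from (iii) of Lemma~\ref{L2.1}(c2) together with Lemma~\ref{L2.2}(a), the prefactors $\tfrac{(K-1)^2}{4(Kb-K+2)}$ and $-\tfrac{(K-1)^3}{48(Kb-K+2)^3(2K-b-3)}$ appearing in \eqref{dai2.13} are both nonzero, so a common zero of $V_3,V_5$ in $\Omega_0$ is exactly a common zero of $\upsilon_1,\upsilon_2$ in $\Omega_0$.

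The main tool is the Sylvester-resultant elimination set up in the excerpt. I would compute, with the aid of a computer algebra system, the resultant $R(K):=\mathrm{Res}(\upsilon_1,\upsilon_2,b)$, which is a univariate polynomial in $K$. By \eqref{dai4.1}, every common real zero $(K_0,b_0)$ of $\upsilon_1$ and $\upsilon_2$ must have $R(K_0)=0$; after checking that the degenerate alternatives (a)--(c) of Lemma~\ref{L4.3} are ruled out by the positivity of the leading coefficients of $\upsilon_1,\upsilon_2$ in $b$ on $\Omega_0$, it suffices to show that $R(K)$ has no real root $K_0$ for which some $b_0\in\bigl(\tfrac{(K_0-1)^2}{4K_0},\,2K_0-3\bigr)$ satisfies $\upsilon_1(K_0,b_0)=0$.

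To carry this out, I would factor $R(K)$ as completely as possible, strip off factors that are manifestly positive on the range $K>(5+4\sqrt{2})/7$, and then apply real root isolation (Sturm sequences) to the remaining factor to produce disjoint rational isolating intervals for its real roots. For each real root $K_0$ lying in the admissible $K$-range, I would compute $\gcd(\upsilon_1(K_0,b),\upsilon_2(K_0,b))$ as a univariate polynomial in $b$ and verify that none of its roots lies in $\bigl(\tfrac{(K_0-1)^2}{4K_0},2K_0-3\bigr)$; equivalently, over an isolating interval for $K_0$ one can substitute $K$-bounds into $\upsilon_1$ and apply Lemma~\ref{L4.3} (the $f^{+}/f^{-}$ sign criterion) on the $b$-interval determined by $\tfrac{(K-1)^2}{4K}<b<2K-3$ to conclude that $\upsilon_1$ does not change sign there, hence cannot vanish jointly with $\upsilon_2$.

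The main obstacle is algorithmic rather than conceptual: the polynomials $\upsilon_1,\upsilon_2$ stashed in Appendix~A are of substantial total degree in $K,b$, so $R(K)$ is of very high degree with many irrelevant real roots, and the bookkeeping required to dispatch each one--choosing tight rational isolating intervals, and showing via Lemma~\ref{L4.3} that the positive and negative monomial parts of $\upsilon_1$ evaluated at the interval endpoints produce a definite sign on the corresponding strip of $\Omega_0$--is delicate. The role of Lemma~\ref{L4.3} is precisely to make this sign analysis rigorous without relying on numerical approximation, so that the non-existence of common real roots of $\{\upsilon_1,\upsilon_2\}$ in $\Omega_0$ is certified by a purely symbolic computation.
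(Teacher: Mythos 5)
Your proposal follows essentially the same route as the paper: reduce the vanishing of $V_3,V_5$ to that of $\upsilon_1,\upsilon_2$ using the nonvanishing prefactors in \eqref{dai2.13}, eliminate $b$ via the Sylvester resultant, factor the resulting univariate polynomial in $K$, isolate the relevant real roots, and certify signs with the $f^{+}/f^{-}$ criterion. The paper's proof does exactly this, obtaining $\mathrm{Res}(\upsilon_1,\upsilon_2,b)=-3221225472\,K^3(K-1)^{13}(K-3)^2\phi_1\phi_2^2\phi_3$ and treating the factors $K-3$, $\phi_1$, $\phi_2$, $\phi_3$ case by case.

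One step of your write-up is wrong as stated, though it does not doom the argument because you also give the correct alternative. You claim that, for each admissible root $K_0$ of the resultant, checking the roots of $\gcd(\upsilon_1(K_0,\cdot),\upsilon_2(K_0,\cdot))$ is ``equivalent'' to showing via Lemma \ref{L4.3} that $\upsilon_1$ does not change sign on the strip $\frac{(K-1)^2}{4K}<b<2K-3$. These are not equivalent, and the second route fails: for the roots $K_1$ of $\phi_1$ and $K_2$ of $\phi_3$ the paper exhibits points $(K_1,b_1)$ and $(K_2,b_2)$ \emph{inside} the admissible region at which $\upsilon_1$ vanishes, so $\upsilon_1$ does change sign there. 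The correct completion (which is what the paper does, and which your first formulation amounts to) is to isolate the common real roots of $\{\phi_i,\upsilon_1\}$ in the region and then use the $f^{+}/f^{-}$ bounds on the isolating box to certify $\upsilon_2>0$ at those points, i.e.\ that they are not common zeros of the pair. Also note that the degenerate alternatives (a)--(c) of the resultant lemma need not be ruled out at all: the argument only uses the forward implication (a common zero of $\upsilon_1,\upsilon_2$ forces the resultant to vanish), encapsulated in $\mathbf{V}(\upsilon_1,\upsilon_2)=\mathbf{V}(\upsilon_1,\upsilon_2,\upsilon_{12})$, which holds unconditionally.
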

\begin{proof}
From Lemmas \ref{L2.1} and \ref{L2.2}, we know that if the conditions in Theorem \ref{T4.1} hold, then we have the following conditions
\begin{equation}\label{cond4.1}
\frac{(K-1)^2}{4K}<b<2K-3, \quad K>\frac{5+4\sqrt{2}}{7},\quad K b-K+2>0,\quad s=s^{*}.
\end{equation}
From \eqref{dai2.13}, it suffices to prove the two  polynomials $\upsilon_1$ and $\upsilon_2$  have no common real root satisfying  \eqref{cond} and \eqref{cond4.1}.
We consider these two polynomials in the ring $\textbf{R}[K, b]$.

Calculating the resultant of $\upsilon_1$ and $\upsilon_2$  with respect to $b$ by Maple, we get
\begin{eqnarray}\label{dai4.2}
\upsilon_{12}:=\textrm{Res}(\upsilon_1, \upsilon_2, b)=-3221225472 K^3(K-1)^{13}(K-3)^2\phi_1\phi^2_2\phi_3,
\end{eqnarray}
where $\phi_1=K^3-6 K^2+9 K-3$, $\phi_2=K^2-4 K+1$, and
$\phi_3$ is a polynomial in $K$ of degree $13$.  And more, $\upsilon_{12}$ is well factored over the rational field.
It follows from \eqref{dai4.1} that $\textbf{V}(\upsilon_1, \upsilon_2)=\textbf{V}(\upsilon_1, \upsilon_2, \upsilon_{12})$.
From \eqref{dai4.2}, we have to discuss the following four  cases.\vspace{3mm}

\textbf{Case} (i): $K=3$. In this case,  $\upsilon_1=-(3 b-1) (b^3-3 b^2+15 b+3)$. By Sturm's Theorem,  $\upsilon_1$ has a unique positive root $b=1/3$, which implies  $Kb-K+2=0$. This  contradicts to  \eqref{cond4.1}. Thus,  $\textbf{V}(\upsilon_1, \upsilon_2)=\emptyset$.\vspace{3mm}

\textbf{Case} (ii): $\phi_1=0$. Denote by $S_1$  a semi-algebraic system whose polynomial equations, non-negative polynomial inequalities, positive polynomial inequalities and polynomial inequations are given by $F:=[\phi_{1}, \upsilon_1]$, $N:=[\ ]$, $P:=[4 K b-(K-1)^2, K b-K+2, 2K-b-3]$, and $H:=[\ ]$, respectively, where $[\ ]$ represents the null set.
Clearly, the regular chain $\{\phi_{1}, \upsilon_1\}$ is squarefree.
In addition, by the \textit{IsZeroDimensional} command in Maple, we know that $\{\phi_{1}, \upsilon_1\}$ is zero dimensional. Using the \textit{RealRootIsolate} program in Maple with accuracy $1/10^{10}$, we find that $S_1$ has a unique class of real root $(K_1, b_1)$ satisfying
$$
\begin{scriptsize}
\begin{aligned}
(K_1, b_1)\in\Bigg[{\frac {16661832741}{4294967296}},{\frac {66647330965}{17179869184}}\Bigg]\times\Bigg[{\frac {14810494337133}{4398046511104}},{\frac {59241977348533}{
17592186044416}}
\Bigg]\triangleq \Big[\underline{K}_{1}, \overline{K}_{1}\Big]\times \Big[\underline{b}_{1}, \overline{b}_{1}\Big].
\end{aligned}
\end{scriptsize}
$$
This implies $\upsilon_1(K_1, b_1)=0$. Next we only need to verify whether $(K_1, b_1)$ is a zero of $\upsilon_2$.
From Lemma \ref{L4.3}, we have
$$\upsilon_2(K_1, b_1)\geq {\upsilon}^+_{2}(\underline{K}_{1}, \underline{b}_{1})+{\upsilon}^-_{2}(\overline{K}_{1}, \overline{b}_{1})\approx 8.737820385\times 10^{7}>0,$$which means that $(K_1, b_1)$ is not a common real root of $\{\upsilon_1, \upsilon_2\}$. Therefore, $\textbf{V}(\upsilon_1, \upsilon_2)=\emptyset$.\vspace{3mm}
\begin{figure}
\begin{minipage}[t]{0.5\linewidth}
\centering
\includegraphics[width=2.8in]{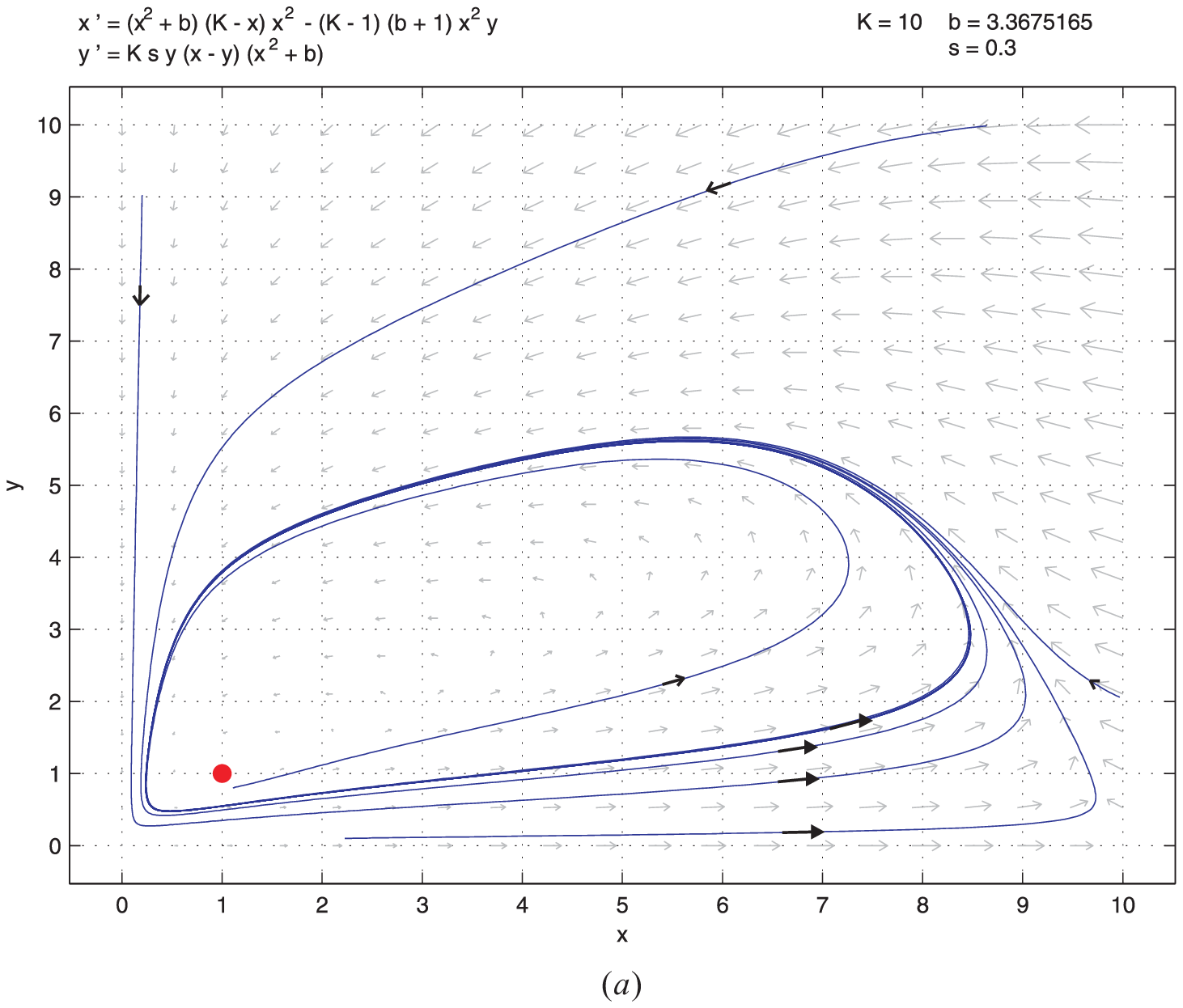}
\end{minipage}%
\begin{minipage}[t]{0.5\linewidth}
\centering
\includegraphics[width=2.8in]{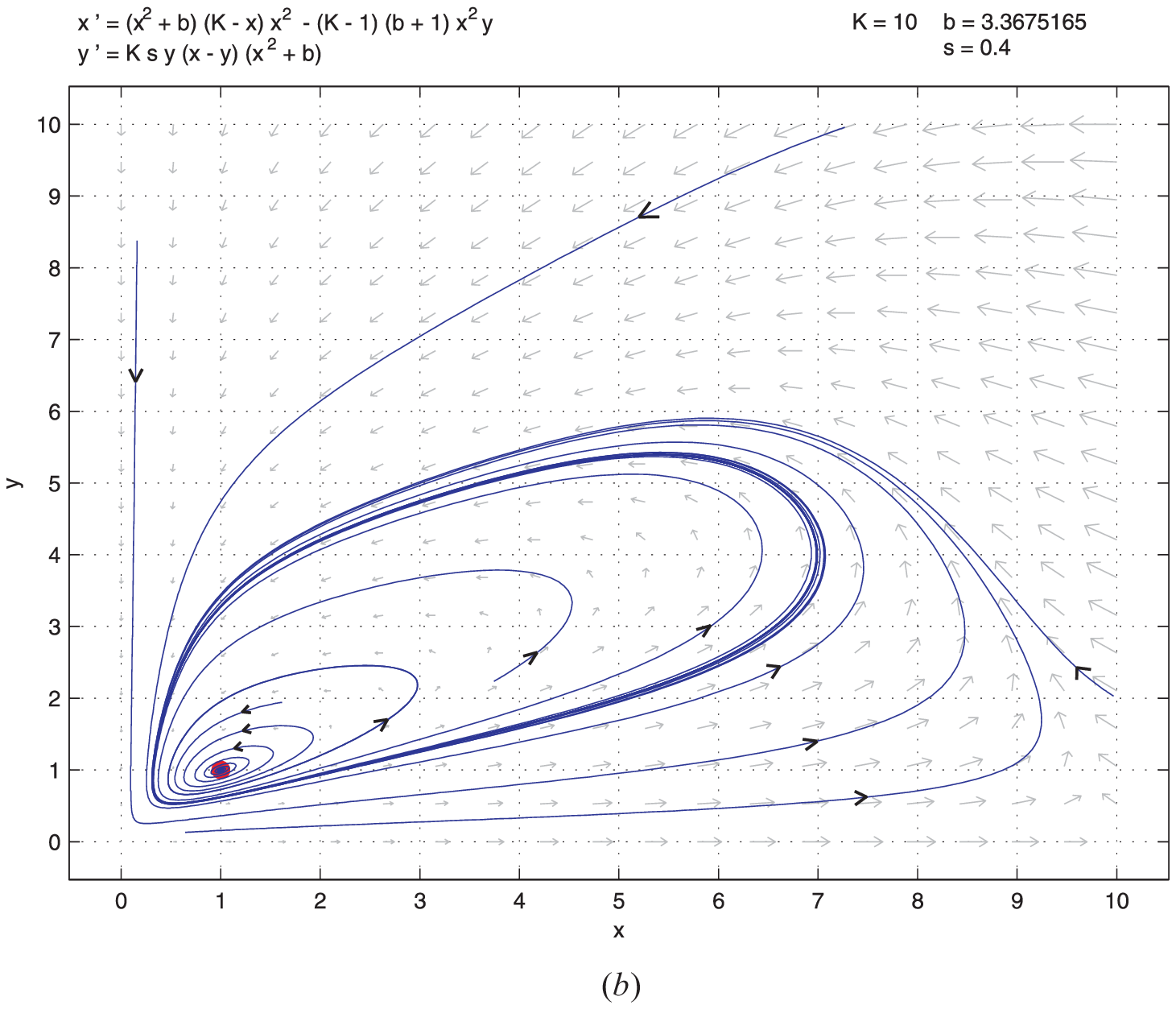}
\end{minipage}
{\small {\bf Fig. 4.1.} Phase portraits for system \eqref{dai2.7} with $b=3.3675165$ near $b_1\approx 3.3675165$: (a) $(K, s)=(10, 0.3)$, a global asymptotically stable limit cycle enclosing an unstable focus;
(b) $(K, s)=(10, 0.4)$, two limit cycles enclosing a stable focus.}
\end{figure}

\textbf{Case} (iii): $\phi_2=0$. Using the same arguments as above and applying the \textit{RealRootIsolate} program in Maple with accuracy $1/10^{10}$, we conclude that $\{\phi_{2}, \upsilon_1\}$ has no common real root satisfying \eqref{cond} and \eqref{cond4.1}, which means $\textbf{V}(\phi_{2}, \upsilon_1)=\emptyset$.
Therefore,  $\textbf{V}(\upsilon_1, \upsilon_2)=\textbf{V}(\upsilon_1, \upsilon_2, \phi_{2})=\emptyset$.\vspace{3mm}

\textbf{Case} (iv): $\phi_3=0$. Similar to the case (ii), using the \textit{RealRootIsolate} program in Maple with accuracy $1/10^{10}$, we know that $\{\phi_{3}, \upsilon_1\}$ has a unique real root $(K_2, b_2)$  satisfying  \eqref{cond}, \eqref{cond4.1} and
$$
\begin{scriptsize}
\begin{aligned}
(K_2, b_2)\in\Bigg[{\frac {50539866915}{17179869184}},{\frac {101079733833}{
34359738368}}\Bigg]\times\Bigg[{\frac {25270463046039627151759819}{
77371252455336267181195264}},{\frac {101081852184158508607039277}{
309485009821345068724781056}}\Bigg].
\end{aligned}
\end{scriptsize}
$$
Similar to determining the sign of $\upsilon_2(K_1, b_1)$, we can check by Maple that $\upsilon_2(K_2, b_2)>0$. This implies $(K_2, b_2)$ is not a common real root of $\{\upsilon_1, \upsilon_2\}$.
Therefore,  $\textbf{V}(\upsilon_1, \upsilon_2)=\textbf{V}(\upsilon_1, \upsilon_2, \phi_{3} )=\emptyset$.

To sum up,  $\upsilon_1$ and $\upsilon_2$ have no common real root satisfying \eqref{cond} and \eqref{cond4.1}. This completes the proof.
\end{proof}
Next we will complete the proof of Theorem \ref{T4.1} by using this Lemma.
\begin{proof}[Proof of Theorem \ref{T4.1}]
From Lemma \ref{L4.4}, if $V_3$ is zero at one class of parameter values, then $V_5$ must not be zero at it.
This implies that  $E^{*}(1, 1)$ cannot be a center and is a weak focus of order  at most $2$.

On the other hand, it follows from the case (ii) in the proof of Lemma \ref{L4.4} that
$\upsilon_1(K_1, b_1)=0$ and $\upsilon_2(K_1, b_1)>0$. From \eqref{dai2.13}, we have $V_3(K_1, b_1)=0$ and $V_5(K_1, b_1)<0$. We first perturb $K_1$ or $b_1$ such that $V_3V_5<0$ and  adjust $s$ such that $V_1=0$.  Then the first  limit cycle bifurcates. The second limit cycle is obtained by  perturbing  $s$ such that  $V_1V_3<0$. Therefore, two limit cycles can bifurcate from $E^{*}(1, 1)$ (see Fig. 4.1). This completes the proof.
\end{proof}
\begin{rema}\label{rem2}
When the system \eqref{dai2.7}  has a unique positive equilibrium, there exist parameter values such that the system has two limit cycles around it. This phenomenon  has been observed not only by Li and Xiao \cite{Li2007} through subcritical Hopf bifurcation and numerical simulations,  but also by Huang et al. \cite{Huang2016} through Bogdanov-Takens bifurcation of codimension $3$ and numerical simulations. It is pointed out that we rigorously prove the  existence of this phenomenon through degenerate Hopf bifurcation.
\end{rema}

\subsubsection{Global stability of the unique positive equilibrium}

In the subsection, we present some sufficient conditions for the global stability of the unique positive equilibrium $E^{*}(1, 1)$, provided that it is locally asymptotically stable. Note that system \eqref{dai2.5} and system \eqref{dai2.7} have the same orbit structure. To simplify the calculation, we consider the system \eqref{dai2.5}. Throughout  the rest of this subsection, we denote
\begin{equation*}
p(x)\triangleq \frac{(K-1)(b+1) x}{K (x^2+b)},\quad h(x)\triangleq \frac{x(K-x)}{K p(x)}.
\end{equation*}

We first give a criterion for the global stability of the unique positive equilibrium.
\begin{lemm}\label{L4.6}  If
\begin{equation}\label{cond5.1}
(x-1)\left(h(x)-1\right)<0 \quad  \textrm{for}\ \  0<x<K,\ x\neq 1,
\end{equation}
then the solutions of system \eqref{dai2.5} with positive initial values satisfy
\begin{equation}\label{dai5.2}
\lim_{t\rightarrow\infty} x(t)=1\quad \textrm{and} \quad \lim_{t\rightarrow\infty} y(t)=1.
\end{equation}
\end{lemm}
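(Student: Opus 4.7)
The plan is to exhibit a Lyapunov function on the open first quadrant that vanishes only at the equilibrium $(1,1)$ and whose derivative along \eqref{dai2.5} is non-positive under the hypothesis \eqref{cond5.1}; LaSalle's invariance principle, together with the boundedness of positive orbits recorded in Section~2.1, will then give \eqref{dai5.2}. Specifically, I would try
\[
V(x,y)=\int_1^x\frac{s(\xi-1)}{\xi\,p(\xi)}\,d\xi+\bigl(y-1-\ln y\bigr),\qquad x>0,\ y>0.
\]
The first integrand has the sign of $\xi-1$, so the first term is $\geq 0$ and vanishes iff $x=1$; the convex function $y-1-\ln y$ is $\geq 0$ and vanishes iff $y=1$. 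Hence $V\geq 0$ with equality iff $(x,y)=(1,1)$. Since $p(\xi)\sim\tfrac{(K-1)(b+1)}{Kb}\,\xi$ as $\xi\to 0^+$, the first term diverges as $x\to 0^+$ and the second diverges as $y\to 0^+$, so $V$ is proper on any bounded positively invariant set that avoids the coordinate axes.

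The heart of the proof is the simplification of $\dot V$. Using $x(1-x/K)=p(x)h(x)$, I would first rewrite \eqref{dai2.5} as $\dot x=p(x)(h(x)-y)$ and $\dot y=sy(x-y)/x$; then the weights in $V$ were chosen precisely so that
\[
\dot V(x,y)=\frac{s}{x}\bigl[(x-1)(h(x)-y)+(y-1)(x-y)\bigr]=\frac{s}{x}\Bigl[(x-1)\bigl(h(x)-1\bigr)-(y-1)^{2}\Bigr],
\]
the last equality being the elementary identity in which the cross terms in $y$ cancel and the remaining $2y-y^2-x+(x-1)$ completes the square $-(y-1)^2$. Under the standing hypothesis \eqref{cond5.1}, the first bracket summand is $\leq 0$ with equality only at $x=1$, while $-(y-1)^{2}\le 0$ with equality only at $y=1$; hence $\dot V\leq 0$ with joint equality only at $(x,y)=(1,1)$.

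To close, I would apply LaSalle on the bounded positively invariant region $\Omega$ from Section~2.1. A positive orbit lies in $\Omega$ eventually, so $V$ is non-increasing along it; since $V$ blows up at $\{x=0\}\cup\{y=0\}$, the orbit is relatively compact in the open first quadrant. The largest invariant subset of $\{\dot V=0\}$ is the single equilibrium $(1,1)$, so every positive orbit converges to $(1,1)$, which is \eqref{dai5.2}. The one real difficulty is spotting the correct weight $s(\xi-1)/(\xi p(\xi))$ that forces the cross terms to collapse into a perfect square in $y$; once that algebraic identity is verified, the rest of the argument is routine LaSalle bookkeeping.
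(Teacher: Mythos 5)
Your proposal is correct and follows essentially the same route as the paper: the same Lyapunov function (yours is the paper's $V$ multiplied by $s$, with the $y$-integral written out as $y-1-\ln y$), the same cancellation yielding $\dot V=\frac{s}{x}\bigl[(x-1)(h(x)-1)-(y-1)^2\bigr]\le 0$, and the same conclusion via boundedness of positive orbits and LaSalle's invariance principle. The extra remarks on properness of $V$ near the coordinate axes are a harmless refinement of the paper's appeal to eventual confinement in $\Omega$.
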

\begin{proof}
Construct the following Liapunov function:
\begin{equation*}
V(x, y)=\int_{1}^{x}\frac{\xi-1}{\xi p(\xi)}d\xi+\frac{1}{s}\int_{1}^{y}\frac{\eta-1}{\eta}d\eta,
\end{equation*}
which is non-negative and vanishes only at $E^{*}(1, 1)$.
The time derivative of $V$ calculated along the solutions of system \eqref{dai2.5} is
$$
\begin{aligned}
\dot{V}&=\frac{x-1}{x}\left(\frac{x(K-x)}{K p(x)}-1\right)+\frac{x-1}{x}(1-y)+
(y-1)\frac{(x-1)-(y-1)}{x}\\
&=\frac{x-1}{x}\left(h(x)-1\right)-\frac{(y-1)^2}{x}\\
&=\frac{1}{x}\left[(x-1)\left(h(x)-1\right)-(y-1)^2\right]\leq 0
\end{aligned}
$$
for $x, y>0$.
Note that solutions of system \eqref{dai2.5} with positive initial values are positive and bounded and  will eventually tend into the region $\Omega$. Furthermore, the condition \eqref{cond5.1} implies $E^{*}(1, 1)$ is the unique positive equilibrium  of system \eqref{dai2.5}.  It follows from LaSalle's invariance principle \cite{LaSalle1961} that \eqref{dai5.2} holds.
\end{proof}
The condition \eqref{cond5.1} says that if the horizontal line $y=1$ divides the prey
isocline $y=h(x)$ into two disjoint parts, then $E^{*}(1, 1)$ is globally asymptotically stable in  $\mathbb{R}^+_2$. In particular, if the prey isocline is non-increasing on $0<x<K$, then $E^{*}(1, 1)$ is globally asymptotically stable. Furthermore, we can get some concrete sufficient conditions for global stability by using  $h(x)$.
\begin{theo}\label{T4.2}  If $b>(K-1)(K+3)/4$ or $K=b=3$, then $E^{*}(1, 1)$ is globally asymptotically stable in the interior of $\mathbb{R}^2_+$.
\end{theo}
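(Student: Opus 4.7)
The plan is to reduce Theorem \ref{T4.2} to Lemma \ref{L4.6}: I only need to verify $(x-1)(h(x)-1) < 0$ for $x \in (0,K)$, $x \neq 1$, and then the global asymptotic stability will follow (Lyapunov stability coming from the Lyapunov function itself, attraction from the lemma's conclusion). First I make $h$ explicit: since $p(x) = (K-1)(b+1)x/[K(x^{2}+b)]$,
\[
h(x) = \frac{x(K-x)}{K p(x)} = \frac{(K-x)(x^{2}+b)}{(K-1)(b+1)},
\]
so $h(x) - 1 = g(x)/[(K-1)(b+1)]$ with $g(x) := (K-x)(x^{2}+b) - (K-1)(b+1)$. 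Note $g(1) = 0$ (since $E^{*}(1,1)$ lies on the prey isocline), so polynomial division gives
\[
g(x) = -(x-1)\, q(x), \qquad q(x) := x^{2} - (K-1)x + (b - K + 1),
\]
and hence
\[
(x-1)(h(x)-1) = -\frac{(x-1)^{2}\, q(x)}{(K-1)(b+1)}.
\]

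By \eqref{cond}, $(K-1)(b+1) > 0$, and $(x-1)^{2} > 0$ whenever $x \neq 1$. Therefore condition \eqref{cond5.1} is equivalent to the single requirement $q(x) > 0$ on $(0,K)\setminus\{1\}$. The problem now collapses to a discriminant check on the quadratic $q$: one computes
\[
\Delta_{q} = (K-1)^{2} - 4(b - K + 1) = (K-1)(K+3) - 4b.
\]
If $b > (K-1)(K+3)/4$ then $\Delta_{q} < 0$, so the monic quadratic $q$ is positive everywhere, and \eqref{cond5.1} holds. If $K = b = 3$, direct substitution gives $q(x) = x^{2} - 2x + 1 = (x-1)^{2}$, which is strictly positive for $x \neq 1$, so \eqref{cond5.1} still holds on the punctured interval.

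In both cases Lemma \ref{L4.6} yields $(x(t),y(t)) \to (1,1)$ for every positive initial datum, and the positive-definite, non-increasing Lyapunov function $V$ constructed there provides Lyapunov stability of $E^{*}$, giving global asymptotic stability in the interior of $\mathbb{R}^{2}_{+}$. There is no essential obstacle in the argument; the single substantive observation is that $g(x)$ automatically carries $(x-1)$ as a factor, so the sign test reduces to a monic quadratic with a single discriminant inequality. The degenerate case $K = b = 3$ is precisely the point on the boundary curve $b = (K-1)(K+3)/4$ at which the double root of $q$ coincides with $x = 1$, which is why it survives the strict-inequality test and must be added separately to the hypothesis alongside the open region $b > (K-1)(K+3)/4$.
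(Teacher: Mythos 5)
Your proof is correct and follows essentially the same route as the paper: both reduce to Lemma \ref{L4.6} by writing $(x-1)(h(x)-1)$ as $(x-1)^2$ times a quadratic over $(K-1)(b+1)$ (your $q$ is the negative of the paper's $Q$) and then checking the discriminant $(K-1)(K+3)-4b$, with the case $K=b=3$ handled by the double root at $x=1$. No substantive difference.
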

\begin{proof}
An easy computation yields
$$(x-1) (h(x)-1)=\frac{(x-1)^2 Q(x)}{(K-1) (b+1)},$$
where $Q(x)=-x^2+(K-1)x+K-b-1$.  Therefore, the condition \eqref{cond5.1} is equivalent to
\begin{equation}\label{hx}
Q(x)<0, \quad  \textrm{for}\ \  0<x<K,\ x\neq 1.
\end{equation}
Note that the symmetric axis of the quadratic function $Q(x)$ is $x=(K-1)/2\in(0, K)$ and
$\widetilde{\Delta}=(K-1)^2+4 (K-b-1)=(K-1)(K+3)-4b$.
 The results follows from Lemma \ref{L4.6}.
\end{proof}

To find some other sufficient conditions for global stability, we now suppose $b>(K-1)^2/(4K)$. Then it follows from Lemma \ref{L2.2} that system \eqref{dai2.5} possesses a unique positive equilibrium $\widetilde{E}(\tilde{x}, \tilde{y})$, where
\begin{equation*}%\label{dai5.5}
\tilde{y}=\frac{\tilde{x}(K-\tilde{x})}{K p(\tilde{x})}=\tilde{x}.
\end{equation*}
The variational matrix of system \eqref{dai2.5} at $\widetilde{E}(\tilde{x}, \tilde{y})$ takes the form
$$\textrm{J}(\tilde{E})=\left(\begin{array}{cc}
1-\dfrac{2\tilde{x}}{K}-p'(\tilde{x})\tilde{y} & p(\tilde{x})\\
-\dfrac{s\tilde{y}^2}{\tilde{x}^2}   & s-\dfrac{2s\tilde{y}}{\tilde{x}}
\end{array}\right)=\left(\begin{array}{cc}
1-\dfrac{2\tilde{x}}{K}-\dfrac{\tilde{x}(K-\tilde{x})p'(\tilde{x})}{K p(\tilde{x})} & p(\tilde{x})\\
-s   & -s
\end{array}\right).
$$
Thus, $\textrm{Det}\left(J(\tilde{E}(\tilde{x}, \tilde{y}))\right)=s \tilde{x} \left(3\tilde{x}^2-2 K \tilde{x}+K b+K-1\right)/(K(\tilde{x}^2+b))$ and
\begin{equation}\label{Tr}
\textrm{Tr}\left(J(\tilde{E}(\tilde{x}, \tilde{y}))\right)=1-s-\frac{2\tilde{x}}{K}-\frac{\tilde{x}(K-\tilde{x})p'(\tilde{x})}{K p(\tilde{x})}=\frac{-P(\tilde{x})}{K(\tilde{x}^2+b)},
\end{equation}
where
\begin{equation}\label{dai5.4}
P(x)=3x^3+K(s-2)x^2+b x+K b s.
\end{equation}
Note that $b>(K-1)^2/(4K)$ implies that $\textrm{Det}\left(J(E^*(1, 1))\right)=s(K b-K+2)/(K(b+1))>0$. It follows from Lemma \ref{L2.2} that $E^{*}(1, 1)$ is the unique positive equilibrium of system \eqref{dai2.5} and is anti-saddle.
Furthermore,  $P(1)=-K(b+1)\textrm{Tr}\left(J(E^{*}(1, 1))\right)$. Thus,  $E^{*}(1, 1)$ is locally asymptotically stable if $P(1)>0$, and is an unstable node or focus if $P(1)<0$, respectively.

Our basic hypothesis is $b>(K-1)^2/(4K)$ and $P(1)>0$, which implies that the unique positive equilibrium $E^{*}(1, 1)$ is locally asymptotically stable. We divide the condition $P(1)>0$ into two cases.\vspace{3mm}

Case 1. \ \ $P(x)\geq 0$ for all $0<x<K$. \vspace{2mm}

From \eqref{dai5.4}, it's obvious that $P(x)>0$ for all $0<x<K$ if $s\geq 2$.

Let $0<s<2$. Note that $P(0)=K b s>0$, $P(K)=K(K^2+b)(s+1)>0$ and $P'(x)=9x^2+2K(s-2)x+b$.
If $D=K^2(2-s)^2-9 b\leq 0$, i.e., $b\geq K^2(2-s)^2/9$, then $P'(x)\geq 0$ for $x>0$, and hence $P(x)\geq 0$ for all $0<x<K$.
If $D>0$, i.e., $b<K^2(2-s)^2/9$, then from $P'(0)=b>0$ and $P'(K)=K^2(2s+5)+b>0$ it follows that $P'(x)=0$ has two positive roots $0<\alpha_1<\alpha_2<K$, where $\alpha_1= \left(K(2-s)-\sqrt{D}\right)/9,\  \alpha_2=\left(K(2-s)+\sqrt{D}\right)/9$.
It's obvious that $P(x)\geq 0$ for all $0<x<K$ if $P(\alpha_2)\geq 0$.
Next we will express this condition explicitly.
Applying  pseudo division and by command \emph{prem} in Maple, we get
\begin{equation}\label{R}
27 P(x)=(9 x+K s-2 K) P'(x)+R(x),
\end{equation}
where $R(x)= \left( -2\,{K}^{2}{s}^{2}+8\,{K}^{2}s-8\,{K}^{2}+18\,b \right) x+26\,
Kbs+2\,Kb$. It follows from \eqref{R} and $P'(\alpha_2)=0$ that $P(\alpha_2)=R(\alpha_2)/27$.
A direct calculation yields
\begin{equation}\label{Ra2}
R(\alpha_2)=\frac{2}{243}\left(K \Psi-D^{\frac{3}{2}}\right),
\end{equation}
where $\Psi=(108\,s+27) b-{K}^{2} (2-s)^{3}$.  From \eqref{Ra2}, $P(\alpha_2)\geq 0$ is equivalent to
\begin{equation}\label{b1}
b>\frac{{K}^{2} (2-s)^{3}}{108\,s+27}\triangleq b_1^*,\,\,\,\,{\rm{and}}\,\,\,\,K^2 \Psi^2-D^3=243 b(3b^2+\chi_1 b+\chi_0)\geq 0,
\end{equation}
where $\chi_1=K^2( 47s^2+28s-1)$, $\chi_0=K^4s ( s-2)^3<0$. Then $K^2 \Psi^2-D^3\geq 0$ implies
\begin{equation}\label{b2}
b\geq \dfrac{-( 47\,{s}^{2}+28\,s-1) +\sqrt{(s+1)(13 s+1)^3}}{6}\cdot K^2 \triangleq b_2^*.
\end{equation}
Since $0<s<2$, it's not difficult to verify that $K^2(2-s)^2/9>b_2^*>b_1^*$. Thus, $P(x)\geq 0$ for  $0<x<K$  if  $0<s<2$ and $b\geq b_2^*$. \vspace{3mm}

Case 2.\ \ $P(x)$ changes its sign in $(0,K)$. \vspace{2mm}

If $P(x)$ changes its sign in $(0,K)$, then it follows from  the discussion in Case 1 that  $0<s<2$ and $b<b_2^*$.  Then $P(x)=0$ has two positive roots $\beta_1$, $\beta_2$ with $0<\alpha_1<\beta_1<\alpha_2<\beta_2<K$.  And $P(x)$ can be written as
$P(x)=3(x+\beta_3)(x-\beta_1)(x-\beta_2)$ with $\beta_3>0$. Thus, the local asymptotical stability condition $P(1)>0$ can be formulated as
\begin{equation}\label{cond4.12}
\beta_1>1\quad \rm{or} \quad \beta_2<1.
\end{equation}
Unfortunately, we can not  get  the global stability  for this case.

We now state our main results for the global stability of $E^{*}(1, 1)$.
\begin{theo}\label{T4.3} Suppose  $b>(K-1)^2/(4 K)$.  The equilibrium $E^{*}(1, 1)$ is globally asymptotically stable in the interior of $\mathbb{R}^2_+$ if one of the following two conditions holds: (i) $s\geq 2$, (ii) $0<s<2$ and $b\geq b_2^*$.
\end{theo}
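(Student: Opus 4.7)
My plan is to establish global asymptotic stability by the Bendixson--Dulac criterion, combined with the local stability and the qualitative boundary analysis already in place. The crucial ingredient is a Dulac function chosen so that the resulting divergence becomes a pure function of $x$ controlled by the polynomial $P(x)$ from \eqref{dai5.4}. I would take
$$
B(x,y)=\frac{x^2+b}{xy^2}
$$
on the open first quadrant $\{x>0,\,y>0\}$. A direct computation starting from the right-hand sides of system \eqref{dai2.5} (the rational term of $\dot x$ is cancelled by the factor $(x^2+b)/x$, while $y^{-2}$ matches the $y$-scaling of $\dot y$) gives
$$
\frac{\partial(B\dot x)}{\partial x}+\frac{\partial(B\dot y)}{\partial y}
=-\frac{P(x)}{K\,x\,y^2},
$$
so the sign of the divergence of the rescaled field is entirely governed by $-P(x)$.

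Next I would verify that $P(x)\geq 0$ on $(0,K)$ under either (i) or (ii). This is exactly the content of the Case 1 discussion preceding the theorem. Under (i), all coefficients of $P$ are nonnegative and $P(0)=Kbs>0$, so $P>0$. Under (ii), if $D\leq 0$ then $P'\geq 0$ on $x>0$ and $P\geq P(0)>0$; if $D>0$ then $P$ has a unique local minimum on $[0,K]$ at $\alpha_2$, and the inequalities \eqref{b1}--\eqref{b2} rearrange precisely to $P(\alpha_2)\geq 0$, so $P\geq 0$ throughout $(0,K)$. Since $P$ is a cubic, its zero set on $(0,K)$ is finite, hence $-P(x)/(Kxy^2)\leq 0$ and does not vanish on any open subset of the open first quadrant. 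The Bendixson--Dulac theorem then rules out any periodic orbit (and any closed graphic) lying in the open first quadrant.

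To conclude, I would gather the remaining ingredients. By Lemma \ref{L2.2}(a), under $b>(K-1)^2/(4K)$ the equilibrium $E^*(1,1)$ is the unique positive equilibrium and is anti-saddle. The strict inequality $P(1)>0$ (which follows immediately from $P(1)=3+b(1+Ks)+K(s-2)$: for $s\geq 2$ all terms are nonnegative with a positive constant; for $0<s<2$ and $b\geq b_2^*$ one checks that $x=1$ is not a double zero of $P$, so $P(x)\geq 0$ together with $P(0)>0$ forces $P(1)>0$) places $E^*(1,1)$ as a locally asymptotically stable focus or node. All orbits with positive initial values are bounded and eventually enter $\Omega$ (Section 2.1); Theorem \ref{T3.1} prevents any $\omega$-limit point at the origin, and the hyperbolic saddle $E_0=(K,0)$ has its stable manifold on the positive $x$-axis, so no interior orbit can accumulate on it either. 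The Poincar\'e--Bendixson theorem then forces every orbit starting in the interior of $\mathbb{R}^2_+$ to converge to $E^*(1,1)$, yielding global asymptotic stability.

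The main obstacle is identifying the correct Dulac function; once $B=(x^2+b)/(xy^2)$ is guessed, the verification that the divergence collapses to $-P(x)/(Kxy^2)$ is mechanical, and the sign analysis of the cubic $P(x)$ required for the dichotomy $s\geq 2$ versus $0<s<2,\ b\geq b_2^*$ has already been carried out before the theorem. The only mildly delicate point is confirming strict local stability of $E^*(1,1)$, i.e.\ $P(1)>0$ rather than just $P(1)\geq 0$, which is straightforward under (i) and follows under (ii) from the structure of the minimum of $P$.
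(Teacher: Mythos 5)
Your proof is correct and follows essentially the same route as the paper: the Dulac function $B(x,y)=(x^2+b)/(xy^2)$ is the paper's choice $1/(p(x)y^2)$ up to the positive constant $(K-1)(b+1)/K$, the divergence collapses to a negative multiple of $P(x)/(xy^2)$ exactly as in \eqref{dai5.7}, and the sign analysis of $P$ under (i) and (ii) is the Case 1 discussion the paper also invokes before concluding via Poincar\'e--Bendixson. Your extra care in checking $P(1)>0$ (rather than just $P(1)\ge 0$) is a minor refinement the paper glosses over, but it does not change the argument.
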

\begin{proof}
From the above discussions,  $E^{*}(1, 1)$ is the unique positive equilibrium of system \eqref{dai2.5} and is locally asymptotically stable. If system  \eqref{dai2.5} has no closed orbits, then $E^{*}(1, 1)$ is globally asymptotically stable by the  Poincar\'{e}-Bendixson theorem. This can be proved  by the Dulac criterion. Construct the Dulac function
\begin{equation*}%\label{cond5.4}
B(x, y)=\frac{1}{p(x)}\cdot\frac{1}{y^2}, \quad x>0,\quad y>0.
\end{equation*}
Denote  the right-hand side of the first and second equation in system \eqref{dai2.5} by $X(x, y)$ and $Y(x, y)$, respectively.
Then from system \eqref{dai2.5} and the hypothesis in this theorem,
\begin{equation}\label{dai5.7}
\Delta=\frac{\partial( B X)}{\partial x}+\frac{\partial( B Y)}{\partial y}=-\frac{B(x, y)P(x)}{K(x^2+b)}\leq 0.
\end{equation}
Thus, there are no nontrivial periodic solutions and we complete the proof.
\end{proof}

\subsection{The case when system \eqref{dai2.7} has three distinct positive equilibria}

This subsection is devoted to study  system \eqref{dai2.7} with three distinct positive equilibria, which means the condition in (c2) of  Lemma \ref{L2.2}   holds.
Denote these three positive equilibria by $E^{*}_1(x_1^{*}, y_1^{*})$, $E^{*}_2(x_2^{*}, y_2^{*})$ and $E^{*}_3(x_3^{*}, y_3^{*})$ with $x_1^{*}<x_2^{*}<x_3^{*}$, respectively, called the \emph{first}, \emph{second} and \emph{third}
positive equilibria respectively. Then it follows from Lemma \ref{L2.2} that the second positive equilibrium is a saddle and the others are both anti-saddle.
We will study the Hopf bifurcation and global dynamics in $\mathbb{R}^+_2$.
This problem can be further split into two cases.
That is, the case when one of the two anti-saddle positive equilibria is center or focus type and the case when both anti-saddle positive equilibria are center or focus type.
For each case, we will investigate the type and stability of each positive equilibrium, Hopf bifurcation at each weak focus, and the number and distribution of limit cycles. We state them in the following two subsections.

\subsubsection{Hopf bifurcation at  each  center or focus type equilibrium}

We first consider the case when one of the two anti-saddle equilibria is center or focus type.  We will investigate the type and stability of this equilibrium. If it is a weak focus, we will study the maximal number of limit cycles bifurcating from it by applying the Hopf bifurcation theory. Our results are as follows.
\begin{theo}\label{T4.4} If system \eqref{dai2.7} has three distinct positive equilibria, then  the following statements hold:
\begin{itemize}
\item[(1)] If the first positive equilibrium is center or focus type, then it cannot be a center and is a weak focus of order at most $2$. Furthermore,  there exist parameter values such that system \eqref{dai2.7} has $i$ small limit cycle(s) around it for  $i=1, 2$.
\item[(2)] If the third positive equilibrium is center or focus type, then it cannot be a center and is an unstable multiple focus with multiplicity one. Furthermore, there exist parameter values such that  system \eqref{dai2.7} has one small stable limit cycle around it.
\end{itemize}
\end{theo}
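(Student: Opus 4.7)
The plan is to imitate the proofs of Theorem \ref{T4.1} and Lemma \ref{L4.4}, exploiting the fact that the scaling \eqref{dai2.1} sends any positive equilibrium of system \eqref{dai1.2} to $E^{*}(1,1)$ of system \eqref{dai2.7}. Consequently, the polynomials $\upsilon_1,\upsilon_2$ that enter the Lyapunov constants \eqref{dai2.13} have exactly the same form no matter which of the three positive equilibria is placed at $(1,1)$; only the admissible region in $(K,b)$-space changes, according to whether $1$ is required to be the smallest or the largest positive root of equation \eqref{dai2.8}. Writing the quadratic factor as $x^2-(K-1)x+Kb$, the condition that $1$ is the smallest of three positive roots in $(0,K)$ translates to $K>3$ and $1-2/K<b<(K-1)^2/(4K)$; the condition that $1$ is the largest translates to $(5+4\sqrt{2})/7<K<3$ and $1-2/K<b<(K-1)^2/(4K)$. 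In both ranges $Kb-K+2>0$ and $2K-b-3>0$ are automatic, so Lemma \ref{L2.1}(c2)(iii) applies and $V_3,V_5$ are the legitimate focal quantities.

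For Part (1) I would re-run the resultant-based elimination of Lemma \ref{L4.4}: compute $\textrm{Res}(\upsilon_1,\upsilon_2,b)$, which factors through the same polynomials $\phi_1,\phi_2,\phi_3$, and go through the four cases corresponding to the factors of $\upsilon_{12}$ in \eqref{dai4.2}. The task is to check, cluster by cluster via real-root isolation together with Lemma \ref{L4.3}, that no common real zero of $\upsilon_1,\upsilon_2$ falls inside the new admissible region $\{K>3,\ 1-2/K<b<(K-1)^2/(4K)\}$. This gives that $E^*(1,1)$ cannot be a center and is a weak focus of order at most $2$. For the second half of the claim, I would then isolate a test point $(\tilde K,\tilde b)$ in the admissible region with $\upsilon_1(\tilde K,\tilde b)=0$ and $\upsilon_2(\tilde K,\tilde b)\neq0$ (so $V_3=0$, $V_5\neq0$) and apply the two-step perturbation of Theorem \ref{T4.1}: a small perturbation of $K$ or $b$ flips the sign of $V_3$ to arrange $V_3V_5<0$ and yields the first limit cycle, after which a perturbation of $s$ around $s^*$ flips $V_1$ to arrange $V_1V_3<0$ and produces the second.

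For Part (2) the key claim to establish is $\upsilon_1>0$ throughout the admissible region $\{(5+4\sqrt{2})/7<K<3,\ 1-2/K<b<(K-1)^2/(4K)\}$. Once this is proved, \eqref{dai2.13} together with $Kb-K+2>0$ and $(K-1)^2>0$ gives $V_3>0$, which shows that $E^*(1,1)$ is not a center and is an unstable weak focus of order exactly one. The bifurcation conclusion then follows by a one-parameter Hopf argument: perturb $s$ slightly above $s^{*}$ so that $V_1<0$; since $V_1V_3<0$ a single small stable limit cycle bifurcates by the standard Andronov--Hopf theorem.

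The main obstacle is rigorously certifying the sign of $\upsilon_1$ over the bounded two-dimensional region in Part (2), since here we must positively confirm a sign rather than merely rule out common zeros as in Part (1). A natural strategy is to show that $\upsilon_1$ has no zero in the interior by computing the resultants of $\upsilon_1$ with each of the boundary polynomials $4Kb-(K-1)^2$, $Kb-K+2$, $K-3$ and $7K^2-10K-1$ after suitably parametrising the boundary, isolating their real roots, and applying Lemma \ref{L4.3} to verify that none of the isolated clusters sits inside the admissible region; verifying $\upsilon_1>0$ at a single sample point (once more via Lemma \ref{L4.3}) then propagates the sign to the whole connected region by continuity. This reduces the problem to a finite collection of univariate sign checks that Maple's \emph{RealRootIsolate} can discharge, but the algebra is heavier than in Part (1) because the polynomial $\upsilon_1$ of Appendix A has high degree and the region has several boundary components.
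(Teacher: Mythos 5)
Your part (1) is essentially the paper's argument: the admissible region $K>3$, $1-2/K<b<(K-1)^2/(4K)$ is identified correctly, and the paper likewise certifies $\textbf{V}(\upsilon_1,\upsilon_2)=\emptyset$ there (it feeds the pair $\{\upsilon_1,\upsilon_2\}$ with the side inequalities directly to \textit{RealRootIsolate} rather than first eliminating $b$ by a resultant, but that is an implementation detail), then produces the two cycles from a common zero of $\upsilon_1$ and $\phi_1$ exactly as you describe. Part (2), however, contains two genuine gaps. First, the admissible region is misidentified: when $1$ is the largest root of \eqref{dai2.8} the correct constraints are $3/2<K<3$ and $\max\{0,\,1-2/K\}<b<\min\bigl\{(K-1)^2/(4K),\,2K-3\bigr\}$, and for $3/2<K<(5+4\sqrt{2})/7$ the binding upper constraint is $b<2K-3$, not $b<(K-1)^2/(4K)$. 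Your region (and your list of boundary polynomials, which omits $b=0$ and $2K-b-3$ while listing the corner conditions $K-3$ and $7K^2-10K-1$) therefore misses a nonempty set of legitimate parameters, e.g.\ $(K,b)=(1.51,\,0.01)$. The paper works on the full curvilinear quadrilateral $\mathcal{D}$ with vertices $A(3/2,0)$, $B(2,0)$, $C(3,1/3)$ and $D\bigl((5+4\sqrt{2})/7,(8\sqrt{2}-11)/7\bigr)$, and the segment $\overline{AD}$ on $b=2K-3$ is one of the four boundary pieces it must (and does) analyse.

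Second, and more seriously, your certification scheme for $\upsilon_1>0$ is logically incomplete. Showing that the curve $\upsilon_1=0$ meets none of the boundary arcs and that $\upsilon_1>0$ at one interior sample point only propagates the sign along paths that avoid the zero set; it does not exclude a compact oval of $\{\upsilon_1=0\}$ lying entirely inside the region, in whose interior $\upsilon_1$ would change sign. The missing step is exactly what the paper supplies: it checks via \textit{RealRootIsolate} that the gradient system $\partial\upsilon_1/\partial K=\partial\upsilon_1/\partial b=0$ of \eqref{dai4.3} has no solution in $\mathcal{D}$, so that the minimum of $\upsilon_1$ over the compact closure $\overline{\mathcal{D}}$ must be attained on the boundary, where the restrictions \eqref{max4.11} are shown nonnegative by Sturm's theorem (vanishing only at the corner $C$). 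You would need to add this interior critical-point check, or an equivalent global certificate, for part (2) to be sound. A final caution: since $V_3>0$, pushing $V_1$ negative produces the small cycle through a subcritical Hopf bifurcation, so the bifurcating cycle is unstable (as in the paper's Fig.~4.4), not stable as you assert in your last step.
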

\begin{proof}
Suppose system \eqref{dai2.7} has three distinct positive equilibria, then the condition in  (c2) of Lemma \ref{L2.2} holds, i.e.,
\begin{equation}\label{cond4.3}
b<\frac{(K-1)^2}{4K},\quad
b\neq 1-\dfrac{2}{K}.
\end{equation}

(1)  We first prove the first part of Theorem \ref{T4.4}.
Without loss of generality, we assume $E^{*}(1, 1)$ is the first positive equilibrium. This implies the value
$1$ is the minimal root of  Eq. \eqref{dai2.8} in the interval $(0, K)$ and hence we have  $(K-1-\sqrt{(K-1)^2-4Kb})/2>1$, which implies
\begin{equation}\label{cond4.4}
K>3, \quad  K b-K+2>0.
\end{equation}
It follows from \eqref{cond4.4} and Lemma \ref{L2.1} that $E^{*}(1, 1)$ is an anti-saddle elementary equilibrium.
Furthermore, if $E^{*}(1, 1)$ is center or focus type, then the condition \eqref{cond2.3} hold.

To prove the first part of the assertion (1), we only need to prove that the first two Lyapunov constants  in \eqref{dai2.13}  have no common real root, i.e.,
 $\upsilon_1$ and  $\upsilon_2$ have no common real root,  satisfying \eqref{cond}, \eqref{cond2.3}, \eqref{cond4.3} and \eqref{cond4.4}. The conditions  \eqref{cond}, \eqref{cond2.3}, \eqref{cond4.3} and \eqref{cond4.4}  imply
\begin{equation}\label{cond4.5}
K>3, \quad 1-\frac{2}{K}<b<\textup{min}\bigg\{\frac{(K-1)^2}{4K},\ 2 K-3\bigg\}.
\end{equation}
Denote by $S_2$ a semi-algebraic system whose polynomial equations, non-negative polynomial inequalities, positive polynomial inequalities and polynomial inequations are given by $F:=[ \upsilon_1, \upsilon_2]$, $N:=[\ ]$, $P:=[K-3, K b-K+2, 2K-b-3, (K-1)^2-4 K b]$, and $H:=[\ ]$, respectively, where $[\ ]$ represents the null set.
Obviously, the regular chain $\{\upsilon_1, \upsilon_2\}$ is squarefree.
In addition, by the \textit{IsZeroDimensional} command in Maple, we know that $\{\upsilon_1, \upsilon_2\}$ is zero dimensional.
Using the \textit{RealRootIsolate} program in Maple with accuracy $1/10^{20}$,
we conclude that $\{\upsilon_1, \upsilon_2\}$ has no common real root satisfying \eqref{cond4.5}.
 Therefore, $E^{*}(1, 1)$ can not be  a center and  is a weak focus of order  at most $2$.

Next we will prove that there exist at least $2$ limit cycles around $E^{*}(1, 1)$. To this end,  we only need to find some parameter values such that two limit cycles can appear near $E^{*}(1, 1)$.  Using the  same arguments as above and  with the  accuracy $1/10^{20}$, we can find $\upsilon_1$ and $\phi_1$  have a unique common real root $(K_3, b_3)$ satisfying \eqref{cond4.5}, where $\phi_1$ is given in \eqref{dai4.2}, and
$$
\begin{scriptsize}
\begin{aligned}
(K_3, b_3)\in\Bigg[{\frac {572496213716809665905}{147573952589676412928}}, {\frac {
286248106858404832953}{73786976294838206464}}
\Bigg]\times\Bigg[{\frac {
19026100882440877121159}{37778931862957161709568}}, {\frac {
2378262610305109640145}{4722366482869645213696}}
\Bigg].
\end{aligned}
\end{scriptsize}
$$
Similar to determining the sign of $\upsilon_2(K_1, b_1)$ in the case (ii) of the proof of Lemma \ref{L4.4}, we can check by Maple that $\upsilon_2(K_3, b_3)>0$.
It follows from \eqref{dai2.13} that we have $V_3(K_3, b_3)=0$ and $V_5(K_3, b_3)<0$.
We first perturb $K$ near $K_3$ or $b$ near $b_3$ such that $V_3V_5<0$ and adjust $s$ such that $V_1=0$ holds.  The first  limit cycle bifurcates. The second limit cycle is obtained by  perturbing  $s$ such that $V_1$ has  the different  sign with  $V_3$.  This proves the first part of Theorem \ref{T4.4}.
\begin{figure}
\begin{minipage}[t]{0.5\linewidth}
\centering
\includegraphics[width=2.8in]{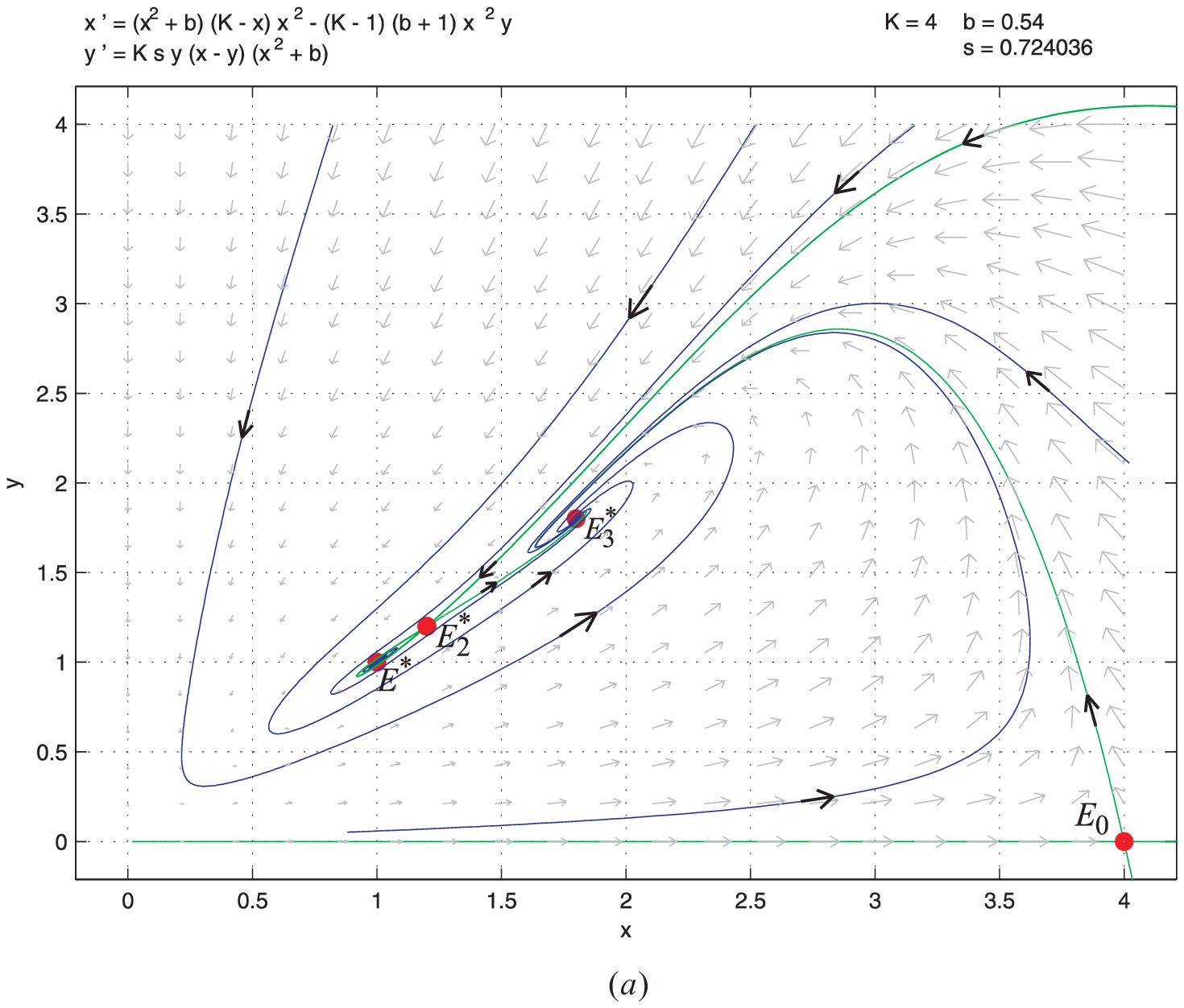}
\end{minipage}%
\begin{minipage}[t]{0.5\linewidth}
\centering
\includegraphics[width=2.8in]{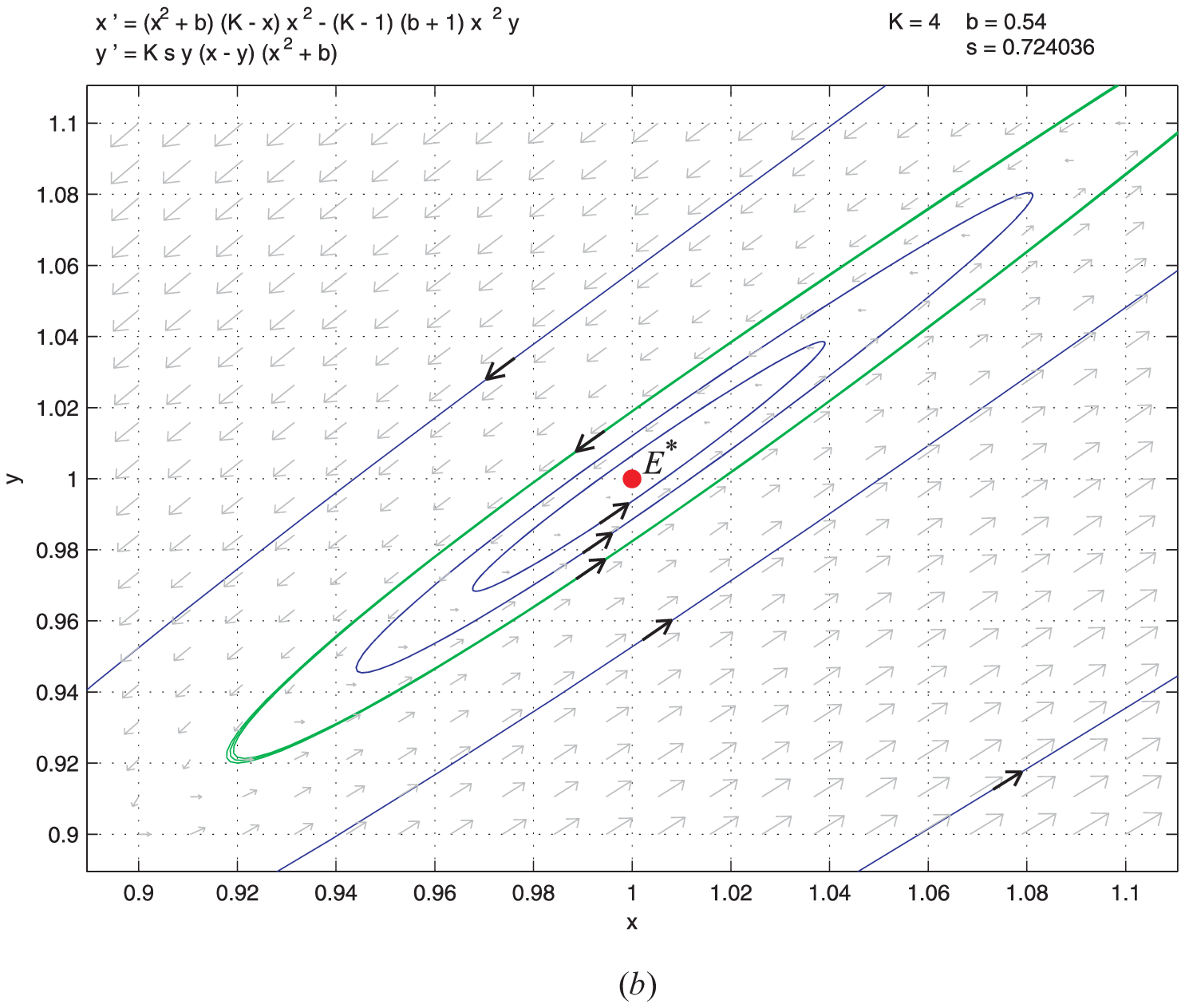}
\end{minipage}
{\small {\bf Fig. 4.2.}
Phase portraits for system \eqref{dai2.7} with $(K, b)=(4, 0.54)$ near $(K_3, b_3)\approx(3.879385, 0.503617)$ and $s=0.724036$, which satisfy the conditions \eqref{cond4.5}: (a) two limit cycles enclosing the first positive equilibrium $E^{*}(1, 1)$; (b) the magnification of the region near $E^{*}(1, 1)$ in the left figure.}
\end{figure}

(2) Now we are going to prove the second part of Theorem \ref{T4.4}. Without loss of generality assume that $E^{*}(1, 1)$ is   the third positive equilibrium, which  implies the value $1$ is the maximal root of  Eq. \eqref{dai2.8} in the interval $(0, K)$.
If $E^{*}(1, 1)$ is center or focus type, then  $(K,b)\in \mathcal{D}$, where
\begin{equation}\label{cond4.6}
\mathcal{D}=\left\{(K,b)\left|1<K<3, \,\, \textup{max}\bigg\{0, 1-\frac{2}{K}\bigg\}<b<\textup{min}\bigg\{\frac{(K-1)^2}{4K},\ 2 K-3\bigg\}\right.\right\}.
\end{equation}
The  set $\mathcal{D}$  is an open region, that is, the shaded region surrounded by the curvilinear rectangle $ABCD$  in Fig. 4.3, where  $AB$ and $AD$ are two line segments, $BC$ and $CD$ are two curves defined by $b=1-2/K$ and $b=(K-1)^2/(4 K)$, respectively.
\begin{figure}\begin{center}
{\includegraphics[height=5cm,width=8.5cm]{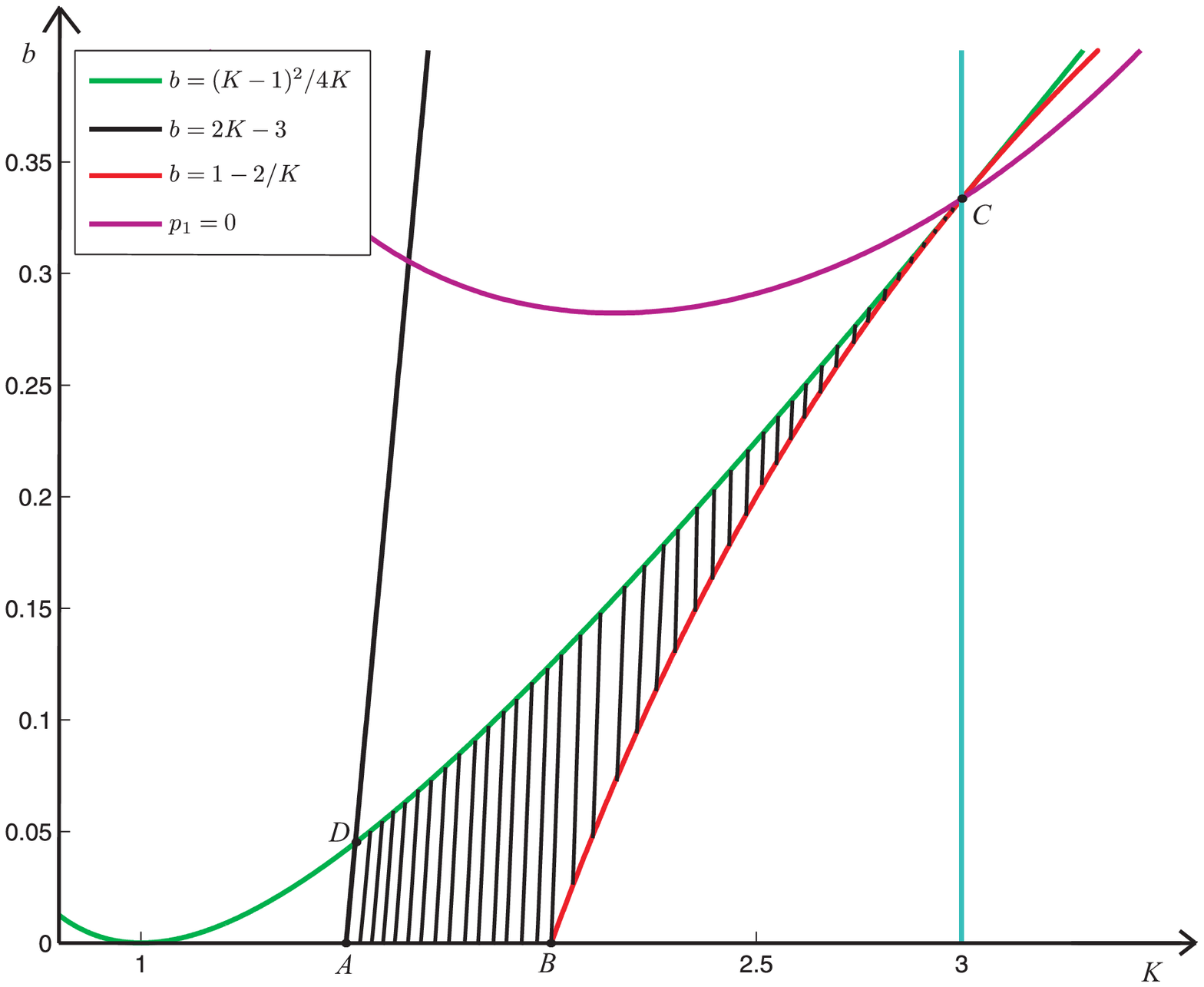}}
\end{center}
\begin{center}
{\small {\bf Fig. 4.3.} Schematic diagram of the region $\mathcal{D}$. }
\end{center}
\end{figure}

In what follows we will prove  $\upsilon_1(K, b)>0$  for $(K,b)\in\mathcal{D}$. We first study the absolute maximum and minimum values of the continuous function $\upsilon_1(K, b)$ on the closure of $\mathcal{D}$, denoted by $\overline{\mathcal{D}}$.
Since $\upsilon_1(K, b)$ is differentiable,  $\upsilon_1(K, b)$ has a maximum point and a minimum point at either the point  $(K,b)\in \mathcal{D}$ satisfying  $\frac{\partial \upsilon_1}{\partial K}=\frac{\partial \upsilon_1}{\partial b}=0$ or the  points on the boundary. Let
\begin{eqnarray}\label{dai4.3}
\begin{array}{ll}
\dfrac{\partial \upsilon_1}{\partial K}=b(12K{b}^{2}-{b}^{3}-8K b-15{b}^{2}+12K-3b-21)=0, \\[6pt]
\dfrac{\partial \upsilon_1}{\partial b}=
18{K}^{2}{b}^{2}-4K{b}^{3}-8{K}^{2}b-45K{b}^{2}+6{K}^{2}-6
K b+3{b}^{2}-21K-6b+15=0.
\end{array}
\end{eqnarray}
Denote by $S_3$ a semi-algebraic system whose polynomial equations, non-negative polynomial inequalities, positive polynomial inequalities and polynomial inequations are given by $F:=\Big[\frac{\partial \upsilon_1}{\partial K}, \frac{\partial \upsilon_1}{\partial b}\Big]$, $N:=[\ ]$, $P:=[K-1, 3-K, b, K b-K+2, 2K-b-3, (K-1)^2-4 K b]$, and $H:=[\ ]$, respectively, where $[\ ]$ represents the null set.
Using the \textit{RealRootIsolate} program in Maple with accuracy $1/10^{20}$,
we conclude that $\{\frac{\partial \upsilon_1}{\partial K}, \frac{\partial \upsilon_1}{\partial b}\}$ has no common real root in $\mathcal{D}$, implying that $\upsilon_1(K, b)$ has no extreme value in the interior of the curvilinear rectangle ABCD.
Therefore, the absolute maximum and minimum values of $\upsilon_1(K, b)$ must be on the boundary.
Noting that $3/2\leq K\leq3$, we have
\begin{eqnarray}\label{max4.11}
\begin{array}{ll}
\upsilon_1\big|_{\overline{AB}}=\upsilon_1\big|_{b=0}=3>0, \\[6pt]
\upsilon_1\big|_{\wideparen{BC}}=\upsilon_1\big|_{b=1-2/K}=\dfrac{8(K-1)^2(K-3)[(K-1)(K-3)-2]}{K^3}\geq 0,\\[6pt]
%\upsilon_1\big|_{\overline{EF}}=\upsilon_1\big|_{l_\epsilon}=
%\dfrac{16}{187388721}(K-3)^2(\Psi_1(K)+h(\epsilon, K)),\\[6pt]
\upsilon_1\big|_{\wideparen{CD}}=\upsilon_1\big|_{b=
(K-1)^2/(4K)}=\dfrac{(K-3)\Psi_1(K)}{256K^3},\\[6pt]
\upsilon_1\big|_{\overline{AD}}=\upsilon_1\big|_{b=2K-3}=32(K-1)^2\Psi_2(K),
\end{array}
\end{eqnarray}
where $\Psi_1=23\,{K}^{7}-191\,{K}^{6}+715\,{K}^{5}-1555\,{K}^{4}+1077\,{K}^{3}-493
\,{K}^{2}+41\,K-1$, $\Psi_2=K^3-6K^2+9K-3$.
By Sturm's Theorem, we have $\Psi_1<0$ for  $K\in [3/2, 3]$, implying that $\upsilon_1\big|_{\wideparen{CD}}\geq 0$. Note that $A(3/2, 0)$ and $D((5+4\sqrt{2})/7, (8\sqrt{2}-11)/7)$.
For the points on the line segment $\overline{AD}$, we have $3/2\leq K\leq (5+4\sqrt{2})/7<11/7$. By Sturm's Theorem, we conclude that $\Psi_2>0$ for  $K\in [3/2, 11/7]$, which follows $\upsilon_1\big|_{\overline{AD}}>0$.
From the discussions above, we know $\upsilon_1(K,b)\geq 0$ for $(K,b)\in\overline{\mathcal{D}}$ and the minimum value $0$ of $\upsilon_1(K, b)$ occurs only at the boundary point $C(3, 1/3)$. Thus, we have $\upsilon_1(K, b)>0$ for $(K,b)\in \mathcal{D}$.

It follows from \eqref{dai2.13} that  $V_3(K, b)>0$ for $(K,b)\in \mathcal{D}$.  This implies $E^{*}(1, 1)$ cannot be a center and is an unstable multiple focus with multiplicity one, provided that it is center or focus type. Furthermore, we can perturb $s$ such that $V_1<0$  and hence one limit cycle bifurcates (see Fig. 4.4). This completes the proof.
\end{proof}
\begin{figure}
\begin{center}
{\includegraphics[height=7cm,width=8.5cm]{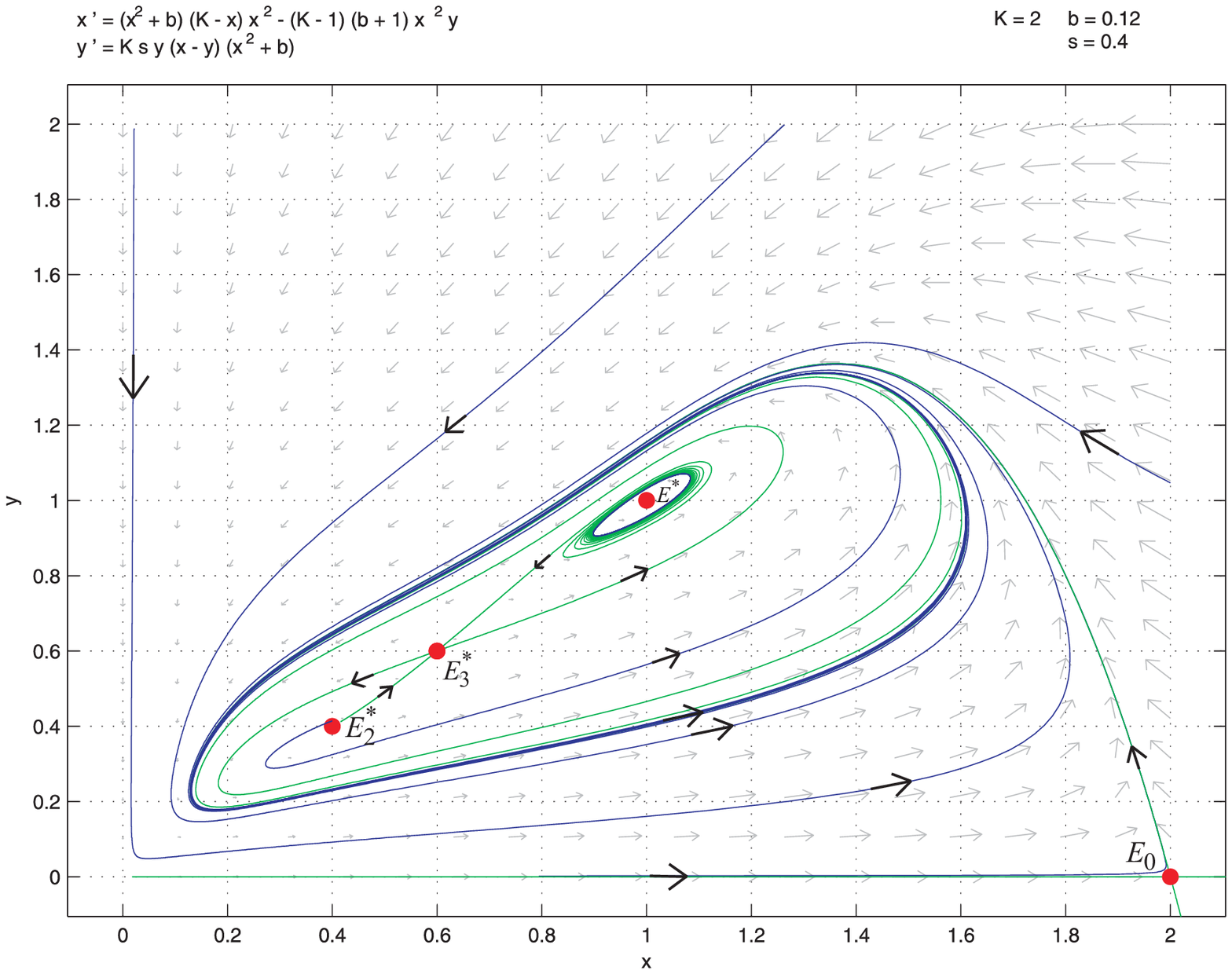}}
\end{center}
{\small {\bf Fig. 4.4.} Phase portraits for system \eqref{dai2.7} with $(K, b, s)=(2, 0.12, 0.4)$, which satisfy the conditions \eqref{cond4.6}: an unstable limit cycle enclosing the first positive equilibrium $E^{*}(1, 1)$ to the left of $E_0$, and a big stable limit cycle enclosing this limit cycle and three hyperbolic positive equilibria, bistability states.}
\end{figure}
\begin{rema}\label{rem3}
When the model has three distinct positive equilibria, the phenomenon that one limit cycle can bifurcate from the first positive equilibrium has also been observed by Collings \cite{Collings1997}, Li and Xiao \cite{Li2007} and Huang et al. \cite{Huang2016}. The phenomenon that one limit cycle can bifurcate from the third positive equilibrium has also been observed by Li and Xiao \cite{Li2007} and Huang et al. \cite{Huang2016}.
However, the phenomenon that two limit cycles can bifurcate from the first positive equilibrium, observed in the present paper, has not been found by other authors.
\end{rema}

\subsubsection{Hopf bifurcation at the two center or focus type equilibria simultaneously}
 If system \eqref{dai2.7} has three distinct positive equilibria, then two of them are anti-saddle. In this subsection, we study system \eqref{dai2.7}  when both   anti-saddle positive equilibria are center or focus type.  We will study the number of limit cycles bifurcating from each of them and discuss all possible distributions of limit cycles. Our results are as follows.
\begin{theo}\label{T4.5}
 If both  anti-saddle equilibria of \eqref{dai2.7}  are center or focus type, then
\begin{itemize}
\item[(1)] they are unstable multiple foci with multiplicity one;
\item[(2)] one limit cycle can bifurcate from each of them simultaneously.
\end{itemize}
\end{theo}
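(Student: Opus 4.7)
The plan is to treat the two anti-saddles separately by applying the scaling \eqref{dai2.1} to place each at $(1,1)$ in turn, and then to invoke Theorem \ref{T4.4}. Because \eqref{dai2.1} is linear and sign-preserving, the type, stability, focus multiplicity, and Hopf cyclicity of an equilibrium are unchanged by rescaling. Applied to $E_3^*$, this immediately puts the rescaled parameters in the region \eqref{cond4.6}, so Theorem \ref{T4.4}(2) directly yields $V_3 > 0$ at $E_3^*$; i.e.\ $E_3^*$ is an unstable weak focus of multiplicity one.

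For $E_1^*$, rescale so that $E_1^*$ becomes $(1,1)$; then the parameters satisfy \eqref{cond4.5} and Theorem \ref{T4.4}(1) already gives that $E_1^*$ is a weak focus of order at most two. To upgrade this to order exactly one with $V_3 > 0$, I would use the additional hypothesis that $E_3^*$ is also center or focus type. In this scaling $E_3^* = (x_3^*, x_3^*)$, where $x_3^*$ is the larger root of $x^2 - (K-1)x + Kb = 0$; the extra trace condition $\textrm{Tr}(J(E_3^*)) = 0$, after substituting $s = s^* = (2K-b-3)/(K(b+1))$ from $V_1(E_1^*) = 0$ and using $(x_3^*)^2 = (K-1)x_3^* - Kb$ to clear the radical, reduces to a polynomial relation $g(K,b) = 0$. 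On the part of $\{g = 0\}$ lying in \eqref{cond4.5}, I would verify $\upsilon_1(K,b) > 0$ (and hence $V_3(E_1^*) > 0$) by exactly the symbolic strategy used in Lemma \ref{L4.4} and in the proof of Theorem \ref{T4.4}(2): compute the Sylvester resultant $\textrm{Res}(g, \upsilon_1, b)$, factor and isolate its real roots with \emph{RealRootIsolate}, and at each isolated candidate apply Lemma \ref{L4.3} to certify the strict positivity of $\upsilon_1$ on a sufficiently narrow interval.

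For part (2), once both anti-saddles are unstable weak foci of multiplicity one, the parameter point sits on the codimension-two set $\{V_1(E_1^*) = 0,\ V_1(E_3^*) = 0\}$ inside the three-dimensional parameter space $(K,b,s)$. Because these two trace functions are functionally independent polynomials, their gradients at a generic point of the set are linearly independent, so I would choose a two-parameter perturbation transverse to this set that simultaneously drives $V_1(E_1^*)$ and $V_1(E_3^*)$ to small values of the sign opposite to $V_3$. Since $V_3 > 0$ at each equilibrium, each trace crossing is a standard subcritical Hopf bifurcation producing exactly one small limit cycle, so one limit cycle bifurcates from $E_1^*$ and one from $E_3^*$ at the same perturbed parameter values.

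The main obstacle is the step for $E_1^*$: producing $g(K,b)$ explicitly, forming the resultant with $\upsilon_1$ (which is already of high degree), and then performing real-root isolation and interval sign-estimation inside the semi-algebraic region \eqref{cond4.5}. The number of candidate roots and the degree of the resultant are likely to be large, so this step must be executed carefully with the aid of a computer algebra system, in the spirit of the multi-case analysis of Lemma \ref{L4.4}.
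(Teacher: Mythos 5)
Your route differs from the paper's in all three pieces, and only two of them hold up. For $E_3^*$ the reduction to Theorem \ref{T4.4}(2) via the rescaling \eqref{dai2.1} is a legitimate shortcut: the paper instead re-derives the statement by reparametrizing with the roots $1<\alpha<\beta$ of Eq.~\eqref{dai2.8} (so $K=\alpha+\beta+1$, $b=\alpha\beta/(\alpha+\beta+1)$) and proving an explicit polynomial $G_1>0$, but your citation of the already-proved result is cleaner. For part (2), your transversality argument is more elaborate than necessary and rests on an unverified ``generic point'' claim about gradient independence; the paper observes that both traces $V_1^{(1)}$ and $V_1^{(3)}$ are strictly decreasing in $s$, so a single perturbation $s\mapsto s_0+\varepsilon$ with $\varepsilon>0$ makes both traces negative simultaneously, and the subcritical Hopf at each focus follows. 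Your argument can be repaired the same way, so I would not count it as a fatal flaw, but as written the independence assertion is a gap.

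The genuine gap is the step for $E_1^*$, which is the heart of claim (1) and which you explicitly leave unexecuted. Theorem \ref{T4.4}(1) only gives order at most $2$, and your plan --- clear the radical from $\mathrm{Tr}(J(E_3^*))=0$ to get $g(K,b)=0$, form $\mathrm{Res}(g,\upsilon_1,b)$, isolate real roots, certify signs --- has two unaddressed pitfalls. First, eliminating the radical produces a variety containing the spurious branch on which the trace at the \emph{smaller} quadratic root (the saddle $E_2^*$) vanishes instead, so every isolated candidate must be assigned to the correct branch. Second, absence of common zeros of $g$ and $\upsilon_1$ does not by itself give $\upsilon_1>0$ on the curve; you also need a connectedness-plus-sample-point argument on each component of $\{g=0\}$ inside \eqref{cond4.5}. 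The paper avoids all of this by working in the $(\alpha,\beta)$ coordinates: the two trace conditions become the rational expressions \eqref{dai4.7} and the quadratic relation $\Phi=0$ of \eqref{dai4.10}, the Lyapunov constant reduces to $V_3^{(1)}=(\alpha+\beta)^2F_1/(4(\alpha-1)(\beta-1))$ with $F_1$ an explicit polynomial, and positivity of $F_1$ is proved by pseudo-dividing $F_1$ by $\Phi$ and applying Sturm's theorem to the resulting univariate polynomials $h_1$ and $h_3$. Until you either carry out your resultant computation with the branch bookkeeping and the connectivity argument, or adopt something like the paper's reparametrization, claim (1) for $E_1^*$ is not established.
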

\begin{proof} To simplify the computations, we introduce the new parameters as follows.  Under  the assumption that system \eqref{dai2.7} has three distinct positive equilibria,  we assume that  Eq. \eqref{dai2.8} has three positive zeros $1,\,\alpha$ and $\beta$ in the interval $(0, K)$  with $1<\alpha<\beta<K$. Then $E^{*}_2(\alpha, \alpha)$ is a saddle,   $E^{*}(1, 1)$ and $E^{*}_3(\beta, \beta)$ are both anti-saddle, respectively.
From Eq. \eqref{dai2.8} and Vieta's formulas for quadratic polynomial, we have
\begin{equation*}%\label{cond4.3}
\alpha\beta=K b, \quad
\alpha+\beta=K-1,
\end{equation*}
i.e.,
\begin{equation}\label{dai4.5}
K=\alpha+\beta+1, \quad
b=\dfrac{\alpha\beta}{\alpha+\beta+1}.
\end{equation}
By \eqref{cond},  system \eqref{dai2.7} has three distinct positive equilibria if and only if \eqref{dai4.5} holds, and
\begin{equation}\label{cond4.7}
1<\alpha<\beta<K,\quad  s>0.
\end{equation}
Using \eqref{dai4.5} and the  time scaling transformation
$dt=d\tau/(\alpha+\beta+1)$, system \eqref{dai2.7} is  reduced to the following equivalent polynomial  system (we will still use $t$ to denote $\tau$ for ease of notation)
\begin{eqnarray}\label{dai4.6}
\left\{
\begin{array}{ll}
\dot{x}=\left((\alpha+\beta+1)x^2+\alpha\beta\right) \left( (\alpha+\beta+1)-x\right) {x}^{2}-(\alpha+\beta )(\alpha\beta+\alpha+\beta+1) {x}^{2}y, \\[6pt]
\dot{y}= ( \alpha+\beta+1)s y (x-y) \left(( \alpha+\beta+1)x^2+\alpha\beta\right),
\end{array}
\right.
\end{eqnarray}
where the parameters $K$, $\alpha$, $\beta$ and $s$ satisfy \eqref{cond4.7}.
Next we study system \eqref{dai4.6}, instead of system \eqref{dai2.7}.

For convenience, we denote $V_1^{(1)}\triangleq\textrm{Tr}(J(E^{*}))$ and $V_1^{(3)}\triangleq\textrm{Tr}(J(E^{*}_3))$, where $J(E^{*})$ is the Jacobian matrix of system \eqref{dai4.6} at $E^{*}$.
 From system \eqref{dai4.6}, it's not difficult to obtain
\begin{eqnarray}\label{focus3}
\left\{
\begin{array}{ll}
V_1^{(1)}= 2\,{\alpha}^{2}+3\,\alpha\,\beta+2\,{\beta}^{2}+\alpha+\beta-1-
 \left( \alpha+1 \right) \left( \beta+1 \right) \left( \alpha+\beta+
1 \right) s,\\[6pt]
V_1^{(3)}= {\beta}^{3} \left( 2\,{\alpha}^{2}+\alpha\,\beta-{\beta}^{2}+3\,\alpha
+\beta+2 \right) -{\beta}^{2} \left( \beta+1 \right)  \left( \alpha+
\beta \right)  \left( \alpha+\beta+1 \right) s.
\end{array}
\right.
\end{eqnarray}
If both   $E^{*}(1, 1)$ and $E_3^{*}(\beta, \beta)$ are center or focus type, then  $V_1^{(1)}=V_1^{(3)}=0$, which implies
\begin{eqnarray}\label{dai4.7}
\left\{
\begin{array}{ll}
s={\dfrac{2\left(17\,{\alpha}^{3}
+19\,{\alpha}^{2}+9\,\alpha-9+\left(5\alpha-3\right)\sqrt {w}\right)}{\left( {\alpha}^{2}-2\,\alpha+3+\sqrt {w} \right)  \left( 3\,{\alpha}^{2}+4\,\alpha+3+
\sqrt {w} \right) }}\triangleq s_0, \\[16pt]
\beta={\dfrac {{\alpha}^{2}-4\,\alpha-3+\sqrt {w}}{2(\alpha+3)}}
\triangleq \beta_0,
\end{array}
\right.
\end{eqnarray}
where $w=9\,{\alpha}^{4}+20\,{\alpha}^{3}+18\,{\alpha}^{2}+12\,\alpha+9>0$.
It's obvious that the denominators of the right-hand side of Eq. \eqref{dai4.7} are positive.

 We first prove the claim (1). It suffices to prove $V_{3}^{(1)}>0$ and $V_{3}^{(3)}>0$, where $V_{3}^{(1)}$ and $V_{3}^{(3)}$ are the first  Lyapunov constants of system \eqref{dai4.6} at  $E^{*}(1, 1)$ and $E_3^{*}(\beta, \beta)$, respectively.

Note that the expressions \eqref{dai4.7} are  complicated  for the computation of  the Lyapunov constants. In order to simplify  calculation, we are in no hurry to substitute \eqref{dai4.7} to the system \eqref{dai4.6}
to calculate  $V_{3}^{(1)}$ and $V_{3}^{(3)}$. We will calculate them separately.

Under the condition $V_1^{(1)}=0$, we can get
\begin{equation}\label{dai4.8}
V_{3}^{(1)}=\dfrac {(\alpha+\beta)^2F_1}{4(\alpha-1)(\beta-1)},
\end{equation}
where we have eliminated the parameter $s$, and $F_1$ is a polynomial in $\alpha$ and $\beta$ of degree $8$ and given in the Appendix B. Similarly, with the condition $V_{1}^{(3)}=0$, we can obtain
\begin{equation}\label{dai4.9}
V_{3}^{(3)}=\dfrac {(\alpha+1)^2{\beta}^{5} G_1}{4(\beta-1)(\beta-\alpha)},
\end{equation}
where $G_1$ is a polynomial in $\alpha$ and $\beta$ of degree $8$ and given in the Appendix B.
From  \eqref{dai4.7}, we know that $\beta$ is the unique positive solution of the following quadratic polynomial equation
\begin{equation}\label{dai4.10}
\Phi:=\left(\alpha+3 \right) {\beta}^{2}- \left( {\alpha}^{2}-4\,\alpha-3
 \right) \beta-2\,{\alpha}^{3}-{\alpha}^{2}+\alpha=0.
\end{equation}
Since $1<\alpha<\beta<K$, the signs of $V_1^{(1)}$ and $V_1^{(3)}$ are the same as that of $F_1$ and $G_1$, respectively. Hence we only need to prove $F_1>0$ and $G_1>0$ under the conditions \eqref{cond4.7} and \eqref{dai4.10}.

We first prove $F_1>0$.
Applying  pseudo division and by command $prem(F_1, \Phi, \beta)$ in Maple, we obtain
\begin{equation}\label{dai4.11}
(\alpha+3)^4F_1=q_1\Phi+(\alpha+1)^3(h_1\beta+h_2),
\end{equation}
where $h_1$ and $h_2$ are polynomials in $\alpha$ of degree $8$ and $9$, respectively, and $q_1$ is a polynomial in $\alpha, \beta$ of degree $9$.
It follows from \eqref{dai4.10} and \eqref{dai4.11} that $F_1=(\alpha+1)^3(h_1\beta+h_2)/(\alpha+3)^4$.
By Sturm's Theorem, we know that $h_1>0$ for $\alpha\in(1, +\infty)$. Noting that $\beta>\alpha$, we have $h_1\,\beta+h_2>h_1\,\alpha+h_2\triangleq h_3$, where $h_3=277\alpha^9-1043 \alpha^8+1296 \alpha^7-262 \alpha^6-858 \alpha^5+96 \alpha^4+576\alpha^3+486\alpha^2+405\alpha+243$.
Similarly, by Sturm's Theorem, we can conclude that $h_3(\alpha)>0$ for $\alpha\in(1, +\infty)$, which yields $F_1>0$.

Next we prove $G_1>0$. Let $\varepsilon=(\beta-\alpha)/(\alpha-1)$, i.e., $\alpha=(\varepsilon+\beta)/(\varepsilon+1)$. Substituting it to $G_1$, we obtain  $G_1=(\beta-1)\overline{G}_1/(\varepsilon+1)^{5}$, where $\overline{G}_1$ is collected by $\varepsilon$ as below.
$$
\begin{aligned}
\overline{G}_1=&(3{\beta}^{7}+21{\beta}^{6}+57{\beta}^{5}+63{\beta}^{4}
+5{\beta}^{3}-13{\beta}^{2}+15\beta-23 ){\varepsilon}^{5}+(24{\beta}^{7}+141{\beta}^{6}+294{\beta}^{5}+199{\beta}^{4}\\
&-20{\beta}^{3}+19{\beta}^{2}-58\beta-23){\varepsilon}^{4}+(75{\beta}^{7}+354{\beta}^{6}+
527{\beta}^{5}+206{\beta}^{4}+17{\beta}^{3}-90{\beta}^{2}-59\beta-\\
&6){\varepsilon}^{3}+(114{\beta}^{7}+408{\beta}^{6}+414{\beta}^{5}+136{\beta}^{4}-82{
\beta}^{3}-80{\beta}^{2}-14\beta){\varepsilon}^{2}+(84{\beta}^{7}+216{\beta}^{6}+
168{\beta}^{5}\\
&-8{\beta}^{4}-60{\beta}^{3}-16{\beta}^{2})\varepsilon+
24{\beta}^{7}+48{\beta}^{6}+16{\beta}^{5}-16{\beta}^{4}-8{\beta}^{3}.
\end{aligned}
$$
Obviously, all the coefficients of the power of $\varepsilon$ in $\overline{G}_1$ are positive since $\beta>1$. This implies $\overline{G}_1>0$, which yields $G_1>0$.
This  completes the proof of the claim (1).

Next we are going to  prove the claim (2). For any given values of $K$ and $\alpha$, let $(s, \beta)=(s_0+\varepsilon, \beta_0)$, where $s_0$ and $\beta_0$ are defined by \eqref{dai4.7}, and $\varepsilon$ is a perturbation parameter. From \eqref{focus3}, we have
\begin{equation}\label{dai4.20}
V_1^{(1)}=-(\alpha+1) (\beta_0+1) (\alpha+\beta_0+1), \quad
V_1^{(3)}=-\beta_0^{2} (\beta_0+1) (\alpha+\beta_0) (\alpha+\beta_0+1).
\end{equation}
By continuity, we can take $\varepsilon$ small enough such that $V_{3}^{(1)}>0$ and $V_{3}^{(3)}>0$.
From \eqref{dai4.20},  it follows from $K>\beta_0>\alpha>1$ that $V_{1}^{(1)}<0$ and $V_{1}^{(3)}<0$ hold as long as $\varepsilon>0$.
That is to say, for any given values of $K$ and $\alpha$, there exist parameter values $(s, \beta)$ near $(s_0, \beta_0)$ such that $V_{1}^{(1)}V_{3}^{(1)}<0$ and $V_{1}^{(3)}V_{3}^{(3)}<0$. This implies one limit cycle can bifurcate from $E^{*}(1, 1)$ and $E_3^{*}(\beta, \beta)$ simultaneously.  This completes the proof.
\end{proof}
\begin{figure}
\begin{minipage}[t]{0.5\linewidth}
\centering
\includegraphics[width=2.8in]{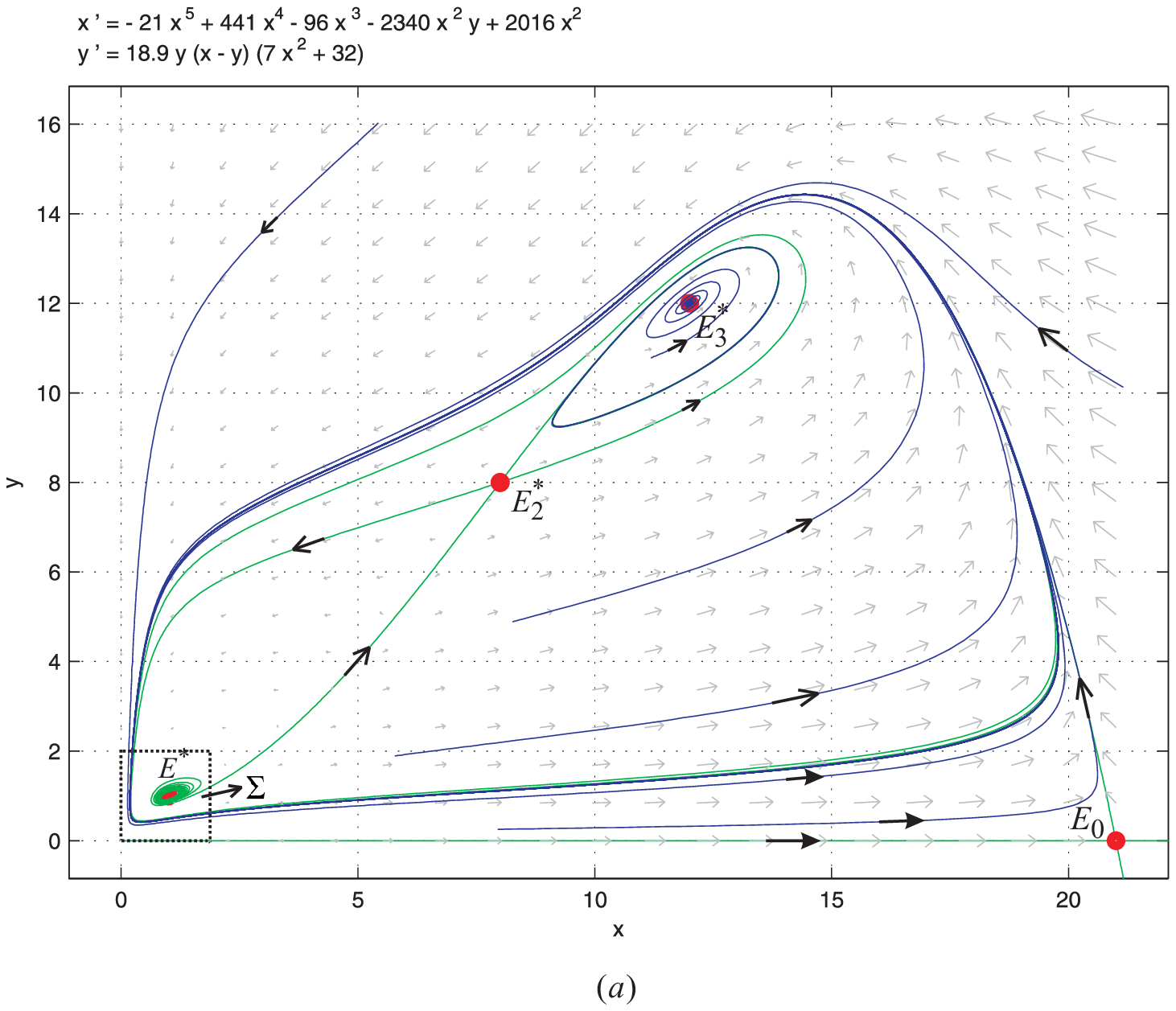}
\end{minipage}%
\begin{minipage}[t]{0.5\linewidth}
\centering
\includegraphics[width=2.8in]{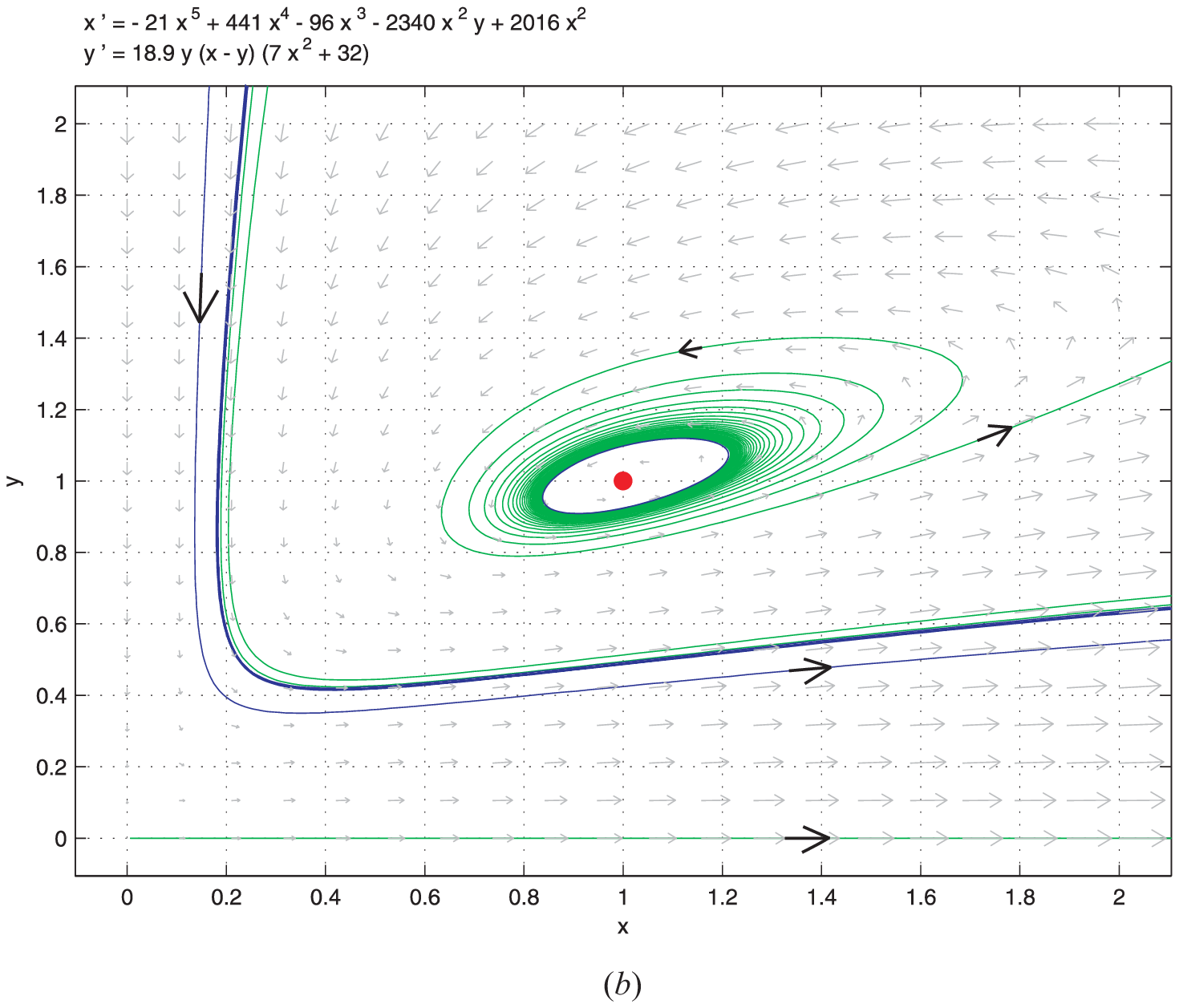}
\end{minipage}
{\small {\bf Fig. 4.5.}
Phase portraits for system \eqref{dai4.6} with $\alpha=8$: (a) choose $(s, \beta)=(0.3, 12)$ near $(s_0, \beta_0)\approx(0.299318, 12.314152)$, which satisfy the conditions \eqref{cond4.7},  one big stable limit cycle enclosing two unstable limit cycles; (b) the magnification of the region $\Sigma$ in the left figure.}
\end{figure}
\begin{rema}\label{rem4}
If system \eqref{dai2.7} has three distinct positive equilibria, then it follows from
Theorem \ref{T4.5} that one limit cycle can bifurcate from each of the two anti-saddle equilibria simultaneously (see Fig. 4.5). This  has not been observed  by Collings \cite{Collings1997}, Li and Xiao \cite{Li2007} and Huang et al. \cite{Huang2016}. Numerical simulations show that there is also a big stable limit cycle enclosing them (see Fig. 4.5), which is a new phenomenon observed in the present paper.  If we can rule out the possibility  that there are  two homoclinic loop connecting with the saddle $E_2^*$, called eight-loop in general, then Poincar\'{e}-Bendixson Theorem implies the existence of the big stable limit cycle.
 Unfortunately, we can't rigorously prove it.
\end{rema}

\section{Discussion}

It is natural that the predator-prey interaction has the tendency or potential to
induce periodic oscillations, which has been confirmed by experimental data.
The existence of limit cycles in predator-prey systems can be used to explain this phenomenon and has been extensively studied, see \cite{Albrecht1973,May1972,Hsu1998}, etc.

The models with Holling types I, II and III functional responses showed qualitatively similar behaviors, which suggests that the model framework provided by system \eqref{dai1.1} is rather robust with respect to these three functional response types. The  different behavior of the system with Holling type IV functional response at higher levels of prey interference is principally because this type functional response is nonmonotonic, while the other three type functional responses are monotonically increasing and tend to a maximum.
The simplified  Holling type IV functional response has the same monotonicity as the Holling type IV functional response and is simpler since it contains less parameters.
Correspondingly, the system \eqref{dai1.1} with simplified  Holling type IV functional response
should have different behavior  from the models with Holling types I, II and III functional responses. This has been confirmed by Li and Xiao \cite{Li2007} and Huang et al. \cite{Huang2016} through theoretical analysis and numerical simulations.

In this paper, we studied the Hopf bifurcation and global dynamics of system \eqref{dai1.2}  with no degenerate positive equilibrium.
It is shown that Hopf bifurcation can occur at each weak focus. Hence, more complicated and new dynamics, such as the coexistence of three hyperbolic positive equilibria and two limit cycles enclosing only one of them, or a big stable limit cycle enclosing two unstable limit cycles, have  been observed in the present paper.
And more, we also rigorously proved the  existence of some phenomena that have been observed ever before, see Remarks \ref{rem2} and \ref{rem3}.
These phenomena not only support the numerical observations of Collings \cite{Collings1997} that there are different kinds of population oscillations and outbreaks in response to increasing temperature-dependent parameters and population perturbations (initial population density), but also complete the bifurcation analysis of Li and Xiao \cite{Li2007} and Huang et al. \cite{Huang2016} on the model.
Therefore, our results can be a complement to the works by Collings \cite{Collings1997}, Li and Xiao \cite{Li2007} and Huang et al. \cite{Huang2016} on the model and show by numerical simulation  that the maximum number of limit cycles in this class of predator-prey system may be at least $3$, which improve the preceding results  that this number is at least $2$.

There are two interesting problems that remain open. One is to prove the
the existence of the big stable limit cycle for some given parameter values, see Remark \ref{rem4}.
We conjecture that this problem may be solved by constructing an appropriate inner boundary of an annular region. The other is the problem of the global stability of $E^{*}(1, 1)$ if \eqref{cond4.12} holds. We leave them for future work.

\section*{Acknowledgements}

The authors thank to Dr. Xiuli  Cen for her helpful discussions and comments.

\section*{Appendix A}

The expressions of $\upsilon_1$,  $\upsilon_2$ in Eq. \eqref{dai2.13}  are listed as follows.
\begin{eqnarray*}
\begin{aligned}
\upsilon_1=&6\,{K}^{2}{b}^{3}-K{b}^{4}-4\,{K}^{2}{b}^{2}-15\,K{b}^{3}+6\,{K}^{2}b-
3\,K{b}^{2}+{b}^{3}-21\,Kb-3\,{b}^{2}+15\,b+3,\\
\upsilon_2=&220\,{K}^{6}{b}^{7}-152\,{K}^{5}{b}^{8}+21\,{K}^{4}{b}^{9}+2622\,{K}^{
6}{b}^{6}-3413\,{K}^{5}{b}^{7}+1184\,{K}^{4}{b}^{8}-91\,{K}^{3}{b}^{9}
-5534\,{K}^{6}{b}^{5}-\\
&12074\,{K}^{5}{b}^{6}+10498\,{K}^{4}{b}^{7}-1845
\,{K}^{3}{b}^{8}+4\,{K}^{2}{b}^{9}+6428\,{K}^{6}{b}^{4}+29321\,{K}^{5}
{b}^{5}+16227\,{K}^{4}{b}^{6}-\\
&8248\,{K}^{3}{b}^{7}-13\,{K}^{2}{b}^{8}-
4592\,{K}^{6}{b}^{3}-37012\,{K}^{5}{b}^{4}-67760\,{K}^{4}{b}^{5}+6762
\,{K}^{3}{b}^{6}-3010\,{K}^{2}{b}^{7}+4\,K{b}^{8}\\
&+1094\,{K}^{6}{b}^{2}
+35821\,{K}^{5}{b}^{3}+80087\,{K}^{4}{b}^{4}+88726\,{K}^{3}{b}^{5}-
22986\,{K}^{2}{b}^{6}+289\,K{b}^{7}+82\,{K}^{6}b-\\
&10058\,{K}^{5}{b}^{2}
-113282\,{K}^{4}{b}^{3}-80428\,{K}^{3}{b}^{4}-54164\,{K}^{2}{b}^{5}+
1108\,K{b}^{6}-8\,{b}^{7}-1217\,{K}^{5}b+\\
&38497\,{K}^{4}{b}^{2}+182632
\,{K}^{3}{b}^{3}+43778\,{K}^{2}{b}^{4}-1209\,K{b}^{5}+13\,{b}^{6}+7067
\,{K}^{4}b-80562\,{K}^{3}{b}^{2}-\\
&151882\,{K}^{2}{b}^{3}-20614\,K{b}^{4
}+48\,{b}^{5}+69\,{K}^{4}-20395\,{K}^{3}b+100022\,{K}^{2}{b}^{2}+53967
\,K{b}^{3}+525\,{b}^{4}-\\
&663\,{K}^{3}+32316\,{K}^{2}b-72528\,K{b}^{2}-
4416\,{b}^{3}+1983\,{K}^{2}-27735\,K\,b+24423\,{b}^{2}-2466\,K+\\
&10584\,b+1215.
\end{aligned}
\end{eqnarray*}

\section*{Appendix B}

The expressions of $F_1$ in Eq. \eqref{dai4.8}, $G_1$ in Eq. \eqref{dai4.9} and $h_1$, $h_2$, $q_1$  in Eq. \eqref{dai4.11} are listed as follows.
\begin{eqnarray*}
\begin{aligned}
F_1=&6\,{\alpha}^{5}{\beta}^{3}+11\,{\alpha
}^{4}{\beta}^{4}+6\,{\alpha}^{3}{\beta}^{5}-4\,{\alpha}^{5}{\beta}^{2}-15\,{\alpha}^{4}{\beta
}^{3}-15\,{\alpha}^{3}{\beta}^{4}-4\,{\alpha}^{
2}{\beta}^{5}+6\,{\alpha}^{5}\,\beta+9\,{\alpha}^{4}{\beta}^{2}-
2\,{
\alpha}^{3}{\beta}^{3}+\\
&9\,{\alpha}^{2}{\beta}^{4}+6\,\alpha{\beta}^{5}+3\,
{\alpha}^{4}\,\beta-12\,{\alpha}^{3}{\beta}^{2}-12\,{\alpha}^{2}{\beta}^{3}+3
\,\alpha\,{\beta}^{4}-12\,{\alpha}^{3}\beta-34\,{\alpha}^{2}{\beta}^{2
}-12\,\alpha\,{\beta}^{3}+3\,{\alpha}^{3}+3\,{\beta}^{3}+\\
&9\,{\alpha}^{
2}+18\,\alpha\,\beta+9\,{\beta}^{2}+9\,\alpha+9\,\beta+3,\\
G_1=&3\,{\alpha}^{3}{\beta}^{5}+9\,{\alpha}^{2}{\beta}^{6}+9\,\alpha\,{
\beta}^{7}+3\,{\beta}^{8}+6\,{\alpha}^{5}{\beta}^{2}+3\,{\alpha}^{4}{
\beta}^{3}-12\,{\alpha}^{3}{\beta}^{4}+18\,\alpha\,{\beta}^{6}+9\,{
\beta}^{7}-4\,{\alpha}^{5}\beta+9\,{\alpha}^{4}{\beta}^{2}-\\
&12\,{\alpha
}^{3}{\beta}^{3}-34\,{\alpha}^{2}{\beta}^{4}+9\,{\beta}^{6}+6\,{\alpha
}^{5}-15\,{\alpha}^{4}\beta-2\,{\alpha}^{3}{\beta}^{2}-12\,{\alpha}^{2
}{\beta}^{3}-12\,\alpha\,{\beta}^{4}+3\,{\beta}^{5}+11\,{\alpha}^{4}-
15\,{\alpha}^{3}\beta+\\
&9\,{\alpha}^{2}{\beta}^{2}+3\,\alpha\,{\beta}^{3
}+6\,{\alpha}^{3}-4\,{\alpha}^{2}\beta+6\,\alpha\,{\beta}^{2},\\
h_1=&139 \,\alpha^8-562 \,\alpha^7+854 \,\alpha^6-372 \,\alpha^5-306 \,\alpha^4-18 \,\alpha^3+486 \,\alpha^2+243,\\
h_2=&138 \,\alpha^9-481 \,\alpha^8+442\, \alpha^7+110 \,\alpha^6-552\, \alpha^5+114 \, \alpha^4+90 \,\alpha^3+486 \,\alpha^2+162 \,\alpha+243.
\end{aligned}
\end{eqnarray*}

\end{document}